\documentclass[11pt]{article}

\usepackage{amssymb,amsfonts,amsmath}
\usepackage{enumerate}

\usepackage[english]{babel}
\usepackage[cp1250]{inputenc}

\usepackage{algorithm2e}

\usepackage{longtable}
\usepackage{graphicx}
\usepackage{index}
\usepackage{fancyhdr}
\usepackage{lhelp}
\usepackage{optparams}
\usepackage{psfrag}
\usepackage{forloop}
\usepackage{setspace}
\usepackage{tikz}
\usetikzlibrary{calc}
\usepackage{subcaption}
\usepackage{pgffor}
\usepackage{xifthen}

\definecolor{lgray}{gray}{0.75}

\usetikzlibrary{decorations.pathreplacing,automata,calc,positioning}

\newcommand{\diam}{{\rm diam}}

\newcommand{\mptt}[1]{}

\newtheorem{theorem}{Theorem} 

\newtheorem{proposition}[theorem]{Proposition}

\newtheorem{observation}[theorem]{Observation}

\newcommand{\ZZ}{\mathbb Z}

\newcommand{\qed}{\hfill $\square$ \bigskip}

\begin{document}

\title{On $S$-packing total colorings}

\author{
Jasmina Ferme$^{a, b}$, Jaka Hed\v zet$^{b,c}$, Petra Melicharov\' a$^{d}$, Da\v sa Mesari\v c \v Stesl$^{e}$\\}

\date{}

\maketitle

\begin{center}
$^a$ Faculty of Education, University of Maribor, Maribor, Slovenia\\
\medskip

$^b$ Faculty of Natural Sciences and Mathematics, University of Maribor, Maribor, Slovenia\\
\medskip

$^c$ Institute of Mathematics, Physics and Mechanics, Ljubljana, Ljubljana, Slovenia\\
\medskip

$^d$ Faculty of Applied Sciences, University of West Bohemia, Plze\v n, Czech Republic
\medskip

$^e$ Faculty of Computer and Information Science, University of Ljubljana, Ljubljana, Slovenia\\
\medskip

\end{center}

\begin{abstract}
In this paper, we generalize the concept of packing total coloring by introducing a new concept called the \textit{$S$-packing total coloring}. For a graph $G$ and a non-decreasing sequence $S=(a_1,a_2,\ldots)$ of positive integers, an $S$-packing total coloring of $G$ is a mapping
$c: V(G)\cup E(G)\rightarrow \{1,2,\ldots\}$ 
such that for any two distinct elements $A,B\in V(G)\cup E(G)$ with $c(A)=c(B)=i$, the distance between $A$ and $B$ is at least $a_i+1$.
The smallest integer \(k\) such that \(G\) admits an \(S\)-packing total coloring using \(k\) colors is called the \textit{\(S\)-packing total chromatic number} of \(G\), denoted by \(\chi_S^{''}(G)\).
For any sequence \(S\), we establish general lower and upper bounds for \(\chi_S^{''}(G)\), and characterize all graphs \(G\) with \(\chi_S^{''}(G)\in\{1,2,3\}\). Furthermore, we investigate \(S\)-packing total chromatic numbers of complete bipartite graphs, as well as infinite and finite paths and cycles.

\end{abstract}

\noindent {\bf Key words: packing coloring, $S$-packing coloring, packing total coloring, $S$-packing total coloring. } 

\medskip\noindent
{\bf AMS Subj.\ Class: 05C15, 05C69, 05C70, 05C12}

\section{Introduction}

In 2008, Goddard and co-authors~\cite{goddard-2008} introduced the concept of an $S$-packing coloring, defined as follows. Given a graph $G$ and a non-decreasing sequence $S=(a_1,a_2,\ldots, a_k)$ of positive integers, an {\em $S$-packing $k$-coloring} of $G$ is a mapping $c:V(G) \rightarrow \{1,2,\ldots,k\}$ with the property that for any distinct vertices $u,v \in V(G)$ with $c(u)=c(v)=i$, the distance between $u$ and $v$ in $G$ is greater than $a_i$ (note that the distance between two vertices refers to the usual shortest-path distance). 
The smallest integer $k$ such that $G$ admits an $S$-packing $k$-coloring is called the \textit{$S$-packing chromatic number} of $G$ and is denoted by $\chi_S(G)$. In particular, when $S=(1, 2, 3, \ldots)$, an $S$-packing coloring is simply called a \textit{packing coloring} of $G$, and the corresponding $S$-packing chromatic number $\chi_S(G)$ is referred to as the \textit{packing chromatic number} of $G$, denoted by $\chi_\rho(G)$. 
Note that $S$-packing colorings, including packing colorings, have attracted significant interest from many researchers and, consequently, have been studied in a number of papers (e.g., a survey paper from 2020~\cite{survey} and more recent works~\cite{bresar2025, dliou, ferme-stesl, harith, hatting, peterin, junosza, mortada1, mortada2}).

Based on $S$-packing coloring, which refers to the coloring of the vertices of a given graph, the analogous concept for edge-coloring was developed, called $S$-packing edge-coloring. More precisely, for a non-decreasing sequence $S=(a_1,a_2,\ldots, a_k)$ of positive integers, an \textit{$S$-packing $k$-edge-coloring} of $G$ is a mapping $c':E(G) \rightarrow \{1,2,\ldots,k\}$ with the property that for any distinct edges $e,f \in E(G)$ with $c'(e)=c'(f)=i$, the distance between $e$ and $f$ in $G$ is at least $a_i+1$. Here, the distance between edges $e=ab$ and $f=cd$ is defined as $d(e,f)=\min \{d(a,c), d(a,d), d(b,c), d(b,d)\}+1$, which coincides with the distance between the corresponding vertices of $e$ and $f$ in the line graph of $G$.
Next, the smallest integer $k$ such that $G$ admits an $S$-packing $k$-edge-coloring is called the \textit{$S$-packing chromatic index} of $G$ and is denoted by $\chi_S'(G)$. In the special case when $S=(1,2,3, \ldots)$, an $S$-packing edge-coloring is called a \textit{packing edge-coloring}, and the corresponding invariant is the \textit{packing chromatic index} of $G$, denoted by $\chi_\rho'(G)$. Note that the concept of $S$-packing edge-coloring, including packing edge-coloring, is relatively new, but it has already been studied in several papers (see, e.g.,~\cite{gt-2019, hocquard, liu, yang}).

Recently, Ferme and \v Stesl~\cite{ferme-stesl} introduced a concept, called packing total coloring, which comprises both, packing coloring (of vertices) and packing edge-coloring. Indeed, a \textit{packing $k$-total coloring} of a given graph $G$ is defined as a mapping $c: V(G) \cup E(G) \rightarrow \{1,2, \ldots, k\}$ which posseses the property that for any $i \in \{1,2, \ldots, k\}$ and any two distinct elements $A, B \in V(G) \cup E(G)$ the following implication holds: if $c(A)=c(B)=i$, then $d(A, B) > i$. Note that $d(A,B)$ is the a) length of a shortest-path between $A$ and $B$ if $A, B \in V(G)$; b) $\min \{d(a,d), d(a,c),d(b,c), d(b,d)\}+1$ if $A, B \in E(G)$ and $A=ab$, $B=cd$; c) $ \min \{d(a,B), d(b,B)\}+1$ if $A=ab \in E(G)$ and $B \in V(G)$; d) $ \min \{d(A,c), d(A,d)\}+1$ if $A \in V(G)$ and $B=cd \in E(G)$. That is, the distance between $A, B \in V(G) \cup E(G)$ coincides with the distance between the corresponding vertices in the \textit{total graph} of a graph $G$, denoted by $T(G)$, with $V(T(G))=V(G) \cup E(G)$ and $E(T(G))=E(G) \cup \{AB;~A,B \in E(G), A,B~\textrm{are incident in} ~G\} \cup  \{aB;~a \in V(G),B \in E(G), a~\textrm{is an endpoint of}~B~\textrm{in}~G\}$. This implies that the packing total coloring of a given graph $G$ is equivalent to the packing coloring of the vertices of the total graph of $G$. Further, the \textit{packing total chromatic number} of $G$, denoted by $\chi_\rho''(G)$, is the smallest integer $k$ for which there exists a packing $k$-total coloring of $G$.

In this paper, we present a generalization of packing total coloring, namely the concept of \textit{$S$-packing total coloring}. Specifically, instead of considering the sequence $S=(1,2,3, \ldots)$ used for a packing total coloring, we consider an arbitrary non-decreasing sequence $S=(a_1,a_2,\ldots)$ of positive integers. 

For any graph $G$ and any non-decreasing sequence $S=(a_1,a_2,\ldots)$ of positive integers, an $S$-packing total coloring 
of a $G$ is a mapping $c: V(G) \cup E(G) \rightarrow \{1,2, \ldots\}$ such that for any $i \in \{1,2, \ldots\}$ and any two distinct elements $A, B \in V(G) \cup E(G)$ the following implication holds: if $c(A)=c(B)=i$, then $d(A, B) > a_i$, where $d(A,B)$ means a) the usual shortest-path distance between $A$ and $B$ if $A, B \in V(G)$; b) the $\min \{d(a,d), d(a,c),d(b,c), d(b,d)\}+1$ if $A, B \in E(G)$ and $A=ab$, $B=cd$; c) the $ \min \{d(a,B), d(b,B)\}+1$ if $A=ab \in E(G)$ and $B \in V(G)$; d) the value of $ \min \{d(A,c), d(A,d)\}+1$ if $A \in V(G)$ and $B=cd \in E(G)$. Recall that the distance between $A, B \in V(G) \cup E(G)$ coincides with the distance between the corresponding vertices in $T(G)$. Consequently, the concept of $S$-packing total coloring of a given graph $G$ is equivalent to an $S$-packing coloring (of the vertices) of the total graph of a graph $G$. 
Next, the smallest integer $k$ such that there exists an $S$-packing total coloring of $G$ using $k$ colors, is called the \textit{$S$-packing total chromatic number} of $G$ and is denoted by $\chi_S^{''}(G)$. Note that this notation coincides with the notations for $S$-packing coloring (by using $\chi_S$) and total coloring (with $''$).

As already mentioned, this paper introduces a new concept called \textit{$S$-packing total coloring}. We begin by defining the concept and the related terminology, and then provide several lower and upper bounds for the $S$-packing total chromatic number of graphs. Furthermore, we characterize all graphs $G$ with $\chi_S^{''}(G) \in \{1,2,3\}$ for an arbitrary sequence $S$.
In Section 4, we investigate $S$-packing total colorings of complete bipartite graphs. We then turn our attention to the infinite path, presenting its $S$-packing total chromatic numbers for all sequences $S$ consisting only of integers $1$ and $2$. In addition, we identify all sequences $S$ for which the $S$-packing total chromatic number of the infinite path is at most $5$. Section 6 is devoted to finite paths and cycles, where we study their $S$-packing total chromatic numbers. Finally, we conclude with several questions and open problems that may serve as directions for future research.


\section{Notations and preliminaries}
In this paper, we consider only simple, undirected graphs. For any such graph $G$ we denote its vertex set by $V(G)$ and its edge set by $E(G)$. 
Let $u$ and $v$ be two distinct vertices from $V(G)$. We say that $u$ is a \textit{neighbor} of $v$ ($v$ is a \emph{neighbor} of $u$, respectively) if $uv \in E(G)$. The set of all neighbors of $u$ is called the \emph{(open) neighborhood} of $u$ and is denoted by $N_G(u)$. The cardinality of $N_G(u)$ is the \emph{degree} of $u$ and is denoted by $deg_G(u)$. In the case when $deg_G(u) = 1$, $u$ is a \textit{leaf}. 
Note that the subscript in the above notations may be omitted if the graph $G$ is clear from context. Further, the \emph{maximum degree} of $G$, denoted by $\Delta(G)$, is defined as $\Delta(G) =\max\{deg(v);~v \in V(G)\}$. 
%
Next, any two distinct edges $e_1, e_2\in E(G)$ are \emph{incident} if they have a common endpoint. The subset of edges $M\subseteq E(G)$ is called a \emph{matching} in a graph $G$ if no two edges of $M$ are incident. 

A graph $H$ is a subgraph of $G$ if $V(H) \subseteq V(G)$ and $E(H) \subseteq E(G)$. 

Next, a \emph{bipartite graph} is a graph $G$ whose vertex set can be partitioned into two disjoint, non-empty sets $V_1$ and $V_2$ such that no two vertices within $V_1$ are adjacent and no two vertices within $V_2$ are adjacent. Equivalently, all edges of $G$ have one endpoint in $V_1$ and the other in $V_2$. Note that the mentioned partition is called a \emph{bi-partition}.
 A bipartite graph $G$ with bi-partition $(V_1,V_2)$ is a \emph{complete bipartite graph} if each vertex from $V_1$ is adjacent to all vertices from $V_2$. A complete bipartite graph whose partite sets satisfy $|V_1|=m$ and $|V_2|=n$ is denoted by $K_{m,n}$. Additionally, a complete bipartite graph $K_{1,n}$ consisting of a central vertex connected to $n$ leaves is called a \emph{star}. 

Let $u,v \in V(G)$ and $ab,cd \in E(G)$. The \emph{distance between the vertices $u$ and $v$}, denoted by $d_G(u,v)$ is defined as the length of a shortest path between $u$ and $v$ in $G$. Next, the \emph{distance between the vertex $u$ and the edge $ab$} is $d(u,ab)=min\{d(u,a), d(u,b)\}+1$. Finally, the \emph{distance between two edges} in $G$, $ab$ and $cd$, is defined as follows: $d(ab,cd)=\min\{d(a,c), d(a,d), d(b,c), d(b,d)\}+1$.

 For a given a graph $G$, the \emph{total graph} of $G$, denoted by $T(G)$, is the graph with the vertex set $V(T(G)) = V(G) \cup E(G)$ and property that two vertices in $T(G)$ are adjacent if and only if they correspond to adjacent vertices in $G$, to incident edges in $G$, or when one corresponds to a vertex $v\in V(G)$ and the other to an edge $e\in E(G)$ with endpoint $v$.

In this paper, we introduce the concept of an $S$-packing total coloring. For a given non-decreasing sequence $S=(a_1, a_2, a_3, \ldots)$ of positive integers, it is defined as a mapping $c: V(G) \cup E(G) \rightarrow \{1,2, \ldots\}$ with the property that for any $i \in \{1,2, \ldots\}$ and any two distinct elements $A, B \in V(G) \cup E(G)$ the following implication holds: if $c(A)=c(B)=i$, then the distance between $A$ and $B$ is at least $a_i+1$. If for a finite sequence $S=(a_1, a_2, a_3, \ldots, a_k)$ there exists and $S$-packing total coloring of a given graph $G$, then we say that $G$ is $S$-packing total colorable graph. 
Next, the smallest integer $k$ such that there exists an $S$-packing total coloring of $G$ using colors from $\{1,2,\ldots,k\}$, is the $S$-packing total chromatic number of $G$, denoted by $\chi_S^{''}(G)$. For a given $S$-packing total coloring $c$ of $G$, we will denote the number of elements of $G$ to which $c$ assigns a color $i$, by $|c^{-1}(i)|$.

Using the definition of a total graph of a given graph $G$ and the fact that for any $A,B \in V(G) \cup E(G)$, $d_G(A,B)$ is the same as the distance between the vertices $A$ and $B$ in $T(G)$, we derive that a concept of an $S$-packing total coloring of a given graph $G$ coincides with the concept of $S$-packing coloring (of the vertices) of $T(G)$. Consequently, $\chi_S^{''}(G)=\chi_S(T(G))$ for any graph $G$.

In the continuation of this paper, each sequence $S$ will be a non-decreasing sequence of positive integers. When only the first few terms of a sequence $S$ are specified and the notation is followed by three dots, the dots represent an arbitrary continuation of the sequence, rather than a repetition of the last listed integer. For example, the notation $S=(1,2,2,\ldots)$ denotes a sequence beginning with $1,2,2$, followed by arbitrary additional integers, which are not necessarily equal to $2$. Additionally, we use the notation $a^r$ in sequences $S$, where $a$ and $r$ are positive integers, to denote $r$ consecutive occurrences of the value $a$ in $S$. Therefore, $a^r$ does not represent a power of $a$, but, for instance, $S=(2^5,4)$ stands for $S=(2,2,2,2,2,4)$.

In some cases, an $S$-packing total coloring of a graph will be described by a \emph{(periodic) color pattern} of the form $[c_1,\dots,c_d]$ (written in square brackets), where the colors $c_n$, for $n \in \{1,\dots,d\}$, are not necessarily pairwise distinct. The colors from the pattern are assigned repeatedly to consecutive elements of the graph, which are specified in the corresponding section. More precisely, applying a color pattern $[c_1,\dots,c_d]$ means that we color the consecutive vertices/edges of a graph one after another using colors $c_1,c_2,\dots,c_d, c_1,c_2,\dots,c_d,c_1,c_2,\dots,c_d, \ldots$

\section{Basic observations and propositions}

In the first part of this section, we provide some bounds on $S$-packing total chromatic numbers of graphs. Further, we characterize all graphs $G$ with $\chi_S^{''}(G)\in \{1,2,3\}$ for an arbitrary sequence $S$. 

For any sequence $S=(a_1,a_2,a_3,\ldots)$, an $S$-packing coloring of a graph assigns colors to its vertices so that any two distinct vertices receiving color $i$ are at distance greater than $a_i$. Likewise, an $S$-packing edge-coloring of a graph $G$ assigns colors to the edges of $G$ so that any two distinct edges colored by $i$ are at distance greater than $a_i$.
Since an $S$-packing total coloring must simultaneously satisfy these vertex and edge conditions (in addition to the constraints involving distances between vertices and edges), the following observation is immediate.

\begin{observation}
    Let $G$ be a graph. Then, for any $S$, $\chi_S^{''}(G)\geq max \{\chi_{S}(G), \chi^{'}_{S}(G)\}.$
\end{observation}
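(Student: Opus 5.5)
The plan is to restrict an optimal coloring. Let $k=\chi_S''(G)$ and fix an $S$-packing total coloring $c: V(G)\cup E(G)\rightarrow\{1,\ldots,k\}$ of $G$. I will show that its restriction to $V(G)$ is an $S$-packing $k$-coloring of $G$, and that its restriction to $E(G)$ is an $S$-packing $k$-edge-coloring of $G$. This gives $\chi_S(G)\le k$ and $\chi_S'(G)\le k$, and hence the claimed inequality.

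\emph{Vertices.} Take distinct $u,v\in V(G)$ with $c(u)=c(v)=i$. The total coloring condition gives $d(u,v)\ge a_i+1$. Here $d(u,v)$ is exactly the ordinary shortest-path distance in $G$, by case a) of the definition. It is also the distance in $T(G)$: the vertices of $G$ induce a copy of $G$ inside $T(G)$, and any path through edge-elements can be shortened. We do not even need this, since the definition of $d$ on pairs of vertices is taken to be the graph distance in $G$. So $c|_{V(G)}$ satisfies the defining condition of an $S$-packing coloring.

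\emph{Edges.} Take distinct $e,f\in E(G)$ with $c(e)=c(f)=i$. Then $d(e,f)\ge a_i+1$. Case b) of the definition of $d$ is literally the edge distance used in the definition of $S$-packing edge-coloring, namely the distance in the line graph. So $c|_{E(G)}$ satisfies the defining condition of an $S$-packing edge-coloring.

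Both restrictions use colors only from $\{1,\ldots,k\}$, which gives the two inequalities. There is no real obstacle. The only point to check is that the distance notions used in the vertex and edge colorings coincide with the restrictions of the total distance $d$. This holds by definition of $d$ in cases a) and b), so no comparison of distances in $T(G)$ versus $G$ or $L(G)$ is needed.
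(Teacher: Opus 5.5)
Your proposal is correct and is essentially the paper's argument: the paper calls the observation immediate because an $S$-packing total coloring satisfies the vertex and edge packing conditions simultaneously. Your proof makes this explicit by restricting an optimal total coloring to $V(G)$ and to $E(G)$, noting that cases a) and b) of the distance $d$ are exactly the distances used in the definitions of $S$-packing coloring and $S$-packing edge-coloring.
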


Moreover, it is known that $\chi(K_n)=\chi^{'}(K_n)=\chi^{''}(K_n)$ if $n \geq 3$ is odd~\cite{behzad}, which means that in the case when $S=(1,1,1, \ldots)$ there exists a graph with $\chi_S(G)=\chi_S^{'}(G)=\chi_S^{''}(G)$. Hence the above written bound is sharp

Additionally, note that the difference between $\chi_S^{''}(G)$ and $ \chi_{S}(G)$ can be arbitrarily large. Indeed, Ferme and Mesari\v c \v Stesl~\cite{ferme-stesl} proved that $\chi_{S}^{''}(K_{1,n}) - \chi_{S}(K_{1,n})=n$ for any $n$ in the case when $S=(1,2,3, \ldots)$. 

Note that the previous observation provides a lower bound for $\chi_S^{''}(G)$. The following observation gives an additional lower bound as well as an upper bound for $\chi_S^{''}(G)$.

\begin{observation} \label{obs3}
    Let $S=(a_1,a_2,\dots)$ and let $G$ be a finite graph with $|V(G)|=n$ and $|E(G)|=m$. Then $1 + \Delta(G) \leq \chi_S^{''}(G) \leq n+m$. Moreover, $\chi_S^{''}(G) = n+m$ if and only if $G$ is connected and $a_1 \geq \max_{A,B \in V(G) \cup E(G)} d(A,B)$.
\end{observation}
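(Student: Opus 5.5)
The plan is to work entirely in the total graph $T(G)$, using the fact stated in Section 2 that $\chi_S^{''}(G)=\chi_S(T(G))$. The upper bound $\chi_S^{''}(G)\le n+m$ is immediate: assign pairwise distinct colors $1,\dots,n+m$ to the $n+m$ elements of $V(G)\cup E(G)$. Then no two distinct elements share a color, and the packing condition holds vacuously.

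For the lower bound I take a vertex $v$ of maximum degree $\Delta(G)$. I look at the set consisting of $v$ and its $\Delta(G)$ incident edges, which has $1+\Delta(G)$ elements. Any two of them are adjacent in $T(G)$: $v$ is an endpoint of each edge, and any two of the edges share the endpoint $v$. So this set is a clique of $T(G)$ and all pairwise distances equal $1$. Since every $a_i\ge 1$, two elements at distance $1$ can never share a color. Hence these $1+\Delta(G)$ elements receive distinct colors, giving $\chi_S^{''}(G)\ge 1+\Delta(G)$.

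For the characterization, write $D=\max_{A,B} d(A,B)$, allowing $D=\infty$ when $G$ is disconnected.

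\emph{Sufficiency.} Assume $G$ is connected and $a_1\ge D$. Since $S$ is non-decreasing, $a_i\ge a_1\ge D$ for every $i$. Any two distinct elements $A,B$ satisfy $d(A,B)\le D\le a_i$, so no color can be used twice. Thus every $S$-packing total coloring is injective, and $\chi_S^{''}(G)\ge n+m$; combined with the upper bound we get equality.

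\emph{Necessity.} I argue by contraposition: I exhibit two distinct elements that can share color $1$. I then color all remaining elements with distinct colors $2,3,\dots$, which uses $n+m-1$ colors in total. If $G$ is disconnected, I pick two elements $A,B$ in different components, so that $d(A,B)=\infty>a_1$, and give both color $1$. If $G$ is connected but $a_1<D$, I pick $A,B$ with $d(A,B)=D>a_1$ and give both color $1$. In either case the only repeated color is $1$, used on a pair at distance greater than $a_1$, so the coloring is valid and $\chi_S^{''}(G)\le n+m-1$.

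The only delicate point is the disconnected case. There the maximum distance is infinite, so the condition ``$a_1\ge D$'' must be read as failing, and one must check that an element in each component exists. This holds because every component contains at least a vertex. Everything else is routine, so I expect no real obstacle beyond stating these conventions cleanly.
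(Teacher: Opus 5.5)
Your proof is correct and follows essentially the same route as the paper: a maximum-degree vertex together with its incident edges gives the lower bound, an injective coloring gives the upper bound, and the characterization rests on whether two distinct elements can share color $1$ (using $a_i \ge a_1$ for the converse). Your explicit handling of the disconnected case and your explicit $(n+m-1)$-coloring only spell out details that the paper leaves implicit.
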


\begin{proof}
       Let $x$ be a vertex of $G$ with $\deg(x)=\Delta(G)$. Since any two distinct elements from $\{x\} \cup \{xv;~xv \in E(G)\}$ are pairwise at distance $1$, any $S$-packing total coloring $c$ of $G$ assigns to these elements pairwise distinct colors. This means that $c$ uses at least $|\{x\} \cup \{xv;~xv \in E(G)\}|=1+\Delta(G)$ colors, hence $\chi_S^{''}(G) \geq 1+\Delta(G)$.
       
    Next, it is clear that $\chi_S^{''}(G) \leq n+m$. Therefore, it remains to show that $\chi_S^{''}(G) = n+m$ if and only if $G$ is connected and $a_1 \geq \max_{A,B \in V(G) \cup E(G)} d(A,B)$. Firstly, if $a_1 < \max_{A,B \in V(G) \cup E(G)} d(A,B)$, then at least two elements of $V(G) \cup E(G)$ can receive color $1$. Therefore $\chi_S^{''}(G) < n+m$.
    On the contrary, if $a_1 \geq \max_{A,B \in V(G) \cup E(G)} d(A,B)$, then $a_i \geq \max_{A,B \in V(G) \cup E(G)} d(A,B)$ for all $i$ and each corresponding color $i$ can be used at most once, hence $\chi_S^{''}(G) = n+m$.
\qed
\end{proof}








Let $S_1=(a_1,a_2,\dots)$ and $S_2=(b_1,b_2,\dots)$ be two non-decreasing sequences of positive integers such that $b_i \leq a_i$ for all $i$. Then, it is clear that each $S_1$-packing total coloring of a graph $G$ is also an $S_2$-packing total coloring of this graph. Thus, the following observation holds.

\begin{observation} \label{obs1}
    Let $S_1=(a_1,a_2,\dots)$ and $S_2=(b_1,b_2,\dots)$ such that $b_i \leq a_i$ for all $i \in \{ 1,\dots,k\}$. Then, for any graph $G$, $\chi_{S_1}^{''}(G)=k$ implies that $\chi_{S_2}^{''}(G) \leq k$.
\end{observation}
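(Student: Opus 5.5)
The approach is a direct monotonicity argument, the same one the paper sketches just before the statement. The only subtlety is that the hypothesis compares $a_i$ and $b_i$ only for $i \in \{1,\dots,k\}$, not for all $i$. So I would not quote the remark verbatim, but repeat its reasoning while restricting attention to the $k$ colors actually in use.

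The steps are as follows.

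\begin{enumerate}
\item Since $\chi_{S_1}^{''}(G)=k$, fix an $S_1$-packing total coloring $c: V(G)\cup E(G)\rightarrow \{1,\dots,k\}$. By definition of the chromatic number, such a coloring uses only colors from $\{1,\dots,k\}$.
\item Claim that the same map $c$ is an $S_2$-packing total coloring of $G$. Take distinct $A,B \in V(G)\cup E(G)$ with $c(A)=c(B)=i$.
\item Because the image of $c$ lies in $\{1,\dots,k\}$, we have $i\le k$, so the hypothesis $b_i\le a_i$ applies to this $i$.
\item Since $c$ is an $S_1$-packing total coloring, $d(A,B)\ge a_i+1\ge b_i+1$.
\item Hence the $S_2$ condition holds for every color class, and $c$ is an $S_2$-packing total coloring with colors in $\{1,\dots,k\}$. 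This gives $\chi_{S_2}^{''}(G)\le k$.
\end{enumerate}

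Alternatively, one could use $\chi_S^{''}(G)=\chi_S(T(G))$ and argue in $T(G)$. The argument is the same, with distances taken in the total graph.

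There is no real obstacle. The one point to state explicitly is that colors larger than $k$ never occur, so the terms $a_i,b_i$ with $i>k$, about which nothing is assumed, play no role. If $k$ is read as a bound for a coloring that may not use all colors, the argument is unchanged, since only used colors matter.
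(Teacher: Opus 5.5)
Your proof is correct and follows the paper's approach: the paper justifies the observation by noting, in the paragraph just before it, that every $S_1$-packing total coloring is also an $S_2$-packing total coloring. Your version improves on that sketch by handling the fact that $b_i \leq a_i$ is assumed only for $i \leq k$; you observe that only colors $i \leq k$ appear in the chosen coloring, which is exactly what the statement as written requires.
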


    

Next, for a given sequence $S$, we consider an $S$-packing total coloring of a subgraph $H$ of a given graph $G$. Note that the distance between any two objects in $H$ is greater or equal to the distance between them in $G$.
Hence, any $S$-packing total coloring of $G$ is also an $S$-packing total coloring of $H$, which gives us the next observation.

\begin{observation} \label{obs2}
    Let $S=(a_1,a_2,\dots)$ and let $H$ be a subgraph of a given graph $G$. Then, $\chi_S^{''}(H) \leq \chi_S^{''}(G)$.
\end{observation}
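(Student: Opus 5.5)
The plan is to show that restricting a coloring of $G$ to the elements of $H$ yields an $S$-packing total coloring of $H$. Since the restriction uses no more colors than the original, taking an optimal coloring of $G$ will give $\chi_S^{''}(H) \leq \chi_S^{''}(G)$.

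Concretely, let $c$ be an $S$-packing total coloring of $G$ using colors from $\{1,\ldots,k\}$, where $k=\chi_S^{''}(G)$. Let $c_H$ be its restriction to $V(H)\cup E(H)\subseteq V(G)\cup E(G)$. Take distinct $A,B\in V(H)\cup E(H)$ with $c_H(A)=c_H(B)=i$. Then $c(A)=c(B)=i$, so $d_G(A,B)>a_i$.

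The one step to verify is the distance comparison $d_H(A,B)\geq d_G(A,B)$ for all elements $A,B$ of $H$. I would handle all four cases (vertex--vertex, vertex--edge, edge--vertex, edge--edge) at once through total graphs, using the fact recalled in Section~2 that $d_G(A,B)=d_{T(G)}(A,B)$.
\begin{itemize}
\item $T(H)$ is a subgraph of $T(G)$. Its vertices are elements of $G$, and each of its adjacencies comes from an adjacency of vertices, an incidence of edges, or a vertex--edge incidence in $H$. Each such relation also holds in $G$.
\item A shortest $A$--$B$ path in $T(H)$ is therefore a path in $T(G)$.
\item Hence $d_{T(G)}(A,B)\leq d_{T(H)}(A,B)$. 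If $A$ and $B$ lie in different components of $H$, the distance in $H$ is infinite and the inequality is trivial.
\end{itemize}
Alternatively, I could argue directly from the four defining formulas. Each is a minimum of vertex distances plus a constant, and vertex distances can only grow when passing to a subgraph.

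Combining, $d_H(A,B)\geq d_G(A,B)>a_i$. So $c_H$ is an $S$-packing total coloring of $H$ with at most $k$ colors, which gives the claim. I do not expect any real obstacle; the only point needing care is the uniform treatment of mixed vertex/edge distances, which the total-graph viewpoint settles.
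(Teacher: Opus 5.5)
Your proposal is correct and follows the same route as the paper: restrict an optimal $S$-packing total coloring of $G$ to $V(H)\cup E(H)$, and use that distances between elements can only grow when passing to the subgraph $H$. Your argument that $T(H)$ is a subgraph of $T(G)$ makes explicit the distance inequality $d_H(A,B)\geq d_G(A,B)$, which the paper only asserts in one line.
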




Next, we continue with a characterization of the graphs with $S$-packing total chromatic number at most $3$.

\begin{proposition}
\label{prop_1_3}
    Let $S=(a_1,a_2,\dots)$. Then $\chi_S^{''}(G) = 1$ if and only if $G$ has no edges. Moreover, there is no graph $G$ with $\chi_S^{''}(G) = 2$.
\end{proposition}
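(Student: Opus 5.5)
The plan is to handle the two claims separately, using the lower bound $\chi_S^{''}(G)\geq 1+\Delta(G)$ from Observation~\ref{obs3} in both parts.

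\emph{First claim.} Suppose $G$ has no edges. Then $V(G)\cup E(G)=V(G)$, and any two distinct vertices lie in different components, so their distance is infinite. In particular it exceeds $a_1$, so colouring every vertex with colour $1$ is an $S$-packing total coloring and $\chi_S^{''}(G)=1$. (If $G$ has at least one vertex we also have $\chi_S^{''}(G)\geq 1$; I will adopt the convention that graphs are nonempty.) Conversely, if $G$ has an edge, then $\Delta(G)\geq 1$, and Observation~\ref{obs3} gives $\chi_S^{''}(G)\geq 2$. So $\chi_S^{''}(G)=1$ forces $E(G)=\emptyset$.

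\emph{Second claim.} Suppose $\chi_S^{''}(G)=2$. By the first part, $G$ has at least one edge $uv$. The vertices $u$, $v$ and the edge $uv$ are pairwise at distance $1$ in $T(G)$, so they form a triangle there. Since every $a_i\geq 1$, any $S$-packing total coloring must give these three elements pairwise distinct colours. Hence $\chi_S^{''}(G)\geq 3$, which is a contradiction. Equivalently, if $G$ has no edges then $\chi_S^{''}(G)=1$, and otherwise $\chi_S^{''}(G)\geq 3$.

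There is no real obstacle here. The only points needing care are that distance $\infty$ between different components counts as exceeding $a_1$, and that all $a_i\geq 1$, so adjacent elements of $T(G)$ always receive distinct colours.
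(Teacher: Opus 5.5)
Your proof is correct and essentially matches the paper's: if $G$ has an edge $uv$, then $u$, $v$ and $uv$ form a triangle in $T(G)$ and must receive three distinct colours, while an edgeless graph can be coloured entirely with colour $1$. The paper uses this triangle argument directly for both claims, so your detour through Observation~\ref{obs3} is unnecessary. Your remark that distinct vertices of an edgeless graph are at infinite distance is a more careful justification than the paper's appeal to mere non-adjacency, which by itself would not suffice when $a_1 \geq 2$.
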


\begin{proof}
    Let $S=(a_1,a_2,\dots)$. If $G$ contains at least one edge, $e=xy$, then the objects $e,x$ and $y$ must receive three pairwise distinct colors by any $S$-packing total coloring $c$ of $G$ and hence, $\chi_S^{''}(G) \geq 3$. Therefore, $\chi_S^{''}(G) \in \{1,2\}$ holds only for graphs $G$ without edges. Since in such graphs all vertices are non-adjacent, they can receive the same color by $c$. Consequently, for any graph $G$ with no edges (and with at least one vertex) $\chi_S^{''}(G) = 1$, and for any other graph $G$, $\chi_S^{''}(G) \geq 3$.
\qed 
\end{proof}

\begin{proposition} \label{prp: chi3}
    Let $S=(a_1,a_2,a_3, \dots)$ and let $G$ be a connected graph. Then the following holds.
    \begin{enumerate} [$(1)$]
        \item If $a_3=1$, then $\chi_S^{''}(G) = 3$ if and only if $G$ is a path $P_n$, $n \geq 2$, or $G$ is a cycle $C_n$ such that $n \equiv 0 \ (\mathrm{mod} \ 3)$.
       \item If $a_3 \geq 2$ and $a_2=1$, then $\chi_S^{''}(G) = 3$ if and only if $G$ is either a path $P_2$ or a path $P_3$.
          \item If $a_3 \geq 2$ and $a_2 \geq 2$, then $\chi_S^{''}(G) = 3$ if and only if $G$ is a path $P_2$.
    \end{enumerate}
\end{proposition}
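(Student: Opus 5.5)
By Observation~\ref{obs3}, $\chi_S^{''}(G)=3$ forces $\Delta(G)\le 2$. By Proposition~\ref{prop_1_3}, it also forces $G$ to have an edge. Since $G$ is connected, $G$ is therefore a path $P_n$ with $n\ge 2$ or a cycle $C_n$ with $n\ge 3$. In both cases the total graph is easy to describe. For $P_n$ with vertices $v_1,\dots,v_n$, list the elements in the order
\[
v_1,\; v_1v_2,\; v_2,\; v_2v_3,\; \dots,\; v_n .
\]
Consecutive elements are adjacent in $T(P_n)$, and every $v_i$ is also adjacent to $v_{i+1}$, i.e.\ to the element two steps further on. The cycle case is the same with cyclic indices, giving $2n$ elements.

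\textbf{Lower bound.} Any three consecutive elements of this sequence form a triangle in $T(G)$: either $v_i, v_iv_{i+1}, v_{i+1}$, or $v_{i-1}v_i, v_i, v_iv_{i+1}$. So every triple of consecutive elements needs three distinct colors. With only three colors, the coloring along the sequence must therefore be periodic of period $3$, a cyclic repetition of a permutation of $\{1,2,3\}$. For a cycle this needs $3\mid 2n$, i.e.\ $3\mid n$. This gives the converse direction of $(1)$.

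\textbf{Part $(1)$, $a_3=1$.} Here all three colors only require distinct colors on adjacent elements. The pattern $[1,2,3]$ along the sequence is then proper, so it is a valid coloring of $P_n$ for every $n\ge2$, and of $C_n$ exactly when $3\mid n$. The main check is that the extra adjacencies $v_iv_{i+1}$ between elements two steps apart get different colors. They do, because positions two steps apart differ by $2\not\equiv0 \pmod 3$.

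\textbf{Parts $(2)$ and $(3)$.} Once $a_3\ge2$, the period-$3$ structure forces each color to recur at sequence distance $3$. I will compute the $T(G)$-distance between such recurring elements. For a vertex $v_i$ and the edge three steps later, $v_{i+1}v_{i+2}$, the distance is $d(v_i,v_{i+1})+1=2$. For an edge $v_iv_{i+1}$ and the vertex three steps later, $v_{i+2}$, the distance is also $2$. Hence any color that appears twice appears at $T$-distance at most $2$, and such a color must have $a_j=1$.

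\begin{itemize}
\item $P_2$ has exactly $3$ elements, so $\chi_S^{''}(P_2)=3$ always.
\item $P_3$ has $5$ elements, so some color repeats. The two repeated elements are at sequence positions $1,4$ or $2,5$, i.e.\ $v_1$ with $v_2v_3$, or $v_1v_2$ with $v_3$, both at $T$-distance $2$. Only one color repeats, so $\chi_S^{''}(P_3)=3$ iff $a_1=1$ or $a_2=1$, i.e.\ iff $a_2=1$ when $a_3\ge2$ (since $a_1\le a_2$).
\item For $P_n$ with $n\ge4$, or any cycle with $3\mid n$, there are at least $7$ elements. In the first $7$, at least two colors recur at distance $3$: positions $1,4,7$ share a color, and positions $2,5$ share another. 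So two colors need $a_j=1$. This forces $a_1=a_2=1$, and then the third color also recurs, at positions $3,6$, forcing $a_3=1$, which contradicts $a_3\ge2$.
\end{itemize}

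The cycle $C_3$ has $6$ elements, with $T(C_3)$-distances all at most $2$, so the same argument applies: each of the three colors repeats, requiring $a_3=1$.

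This settles $(2)$ and $(3)$. The main care point is the exact distance computations in $T(G)$, in particular confirming that elements three apart in the sequence are never at distance $\ge3$.
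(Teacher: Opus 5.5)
Your overall route is the paper's: reduce to paths and cycles via $\Delta(G)\le 2$, then use that three consecutive elements of $v_1,e_{1,2},v_2,\dots$ form a triangle in $T(G)$, which forces a $3$-coloring to be $3$-periodic. You also supply details the paper dismisses as ``easy to see''. However, your $P_3$ bullet contains a real error.

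With exactly three colors, the five elements $v_1,e_{1,2},v_2,e_{2,3},v_3$ are forced to be colored $c_1,c_2,c_3,c_1,c_2$. So positions $1,4$ \emph{and} positions $2,5$ share colors: two colors repeat, not one. Indeed, under the period-$3$ structure a single repeated color would cover at most $4$ of the $5$ elements. Your intermediate criterion ``$\chi_S''(P_3)=3$ iff $a_1=1$ or $a_2=1$'' is therefore false. The following ``i.e.\ iff $a_2=1$'' also does not follow from it: since $a_1\le a_2$, the condition ``$a_1=1$ or $a_2=1$'' is equivalent to $a_1=1$. Taken literally, your argument would allow a $3$-coloring of $P_3$ for $S=(1,2,2,\dots)$. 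That sequence falls under part $(3)$, and in fact $\chi_S''(P_3)=4$ there. This is exactly the subcase $a_1=1$ that the paper's proof of $(3)$ singles out.

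The repair is immediate with your own distance computation. Both repeated pairs, $v_1,e_{2,3}$ and $e_{1,2},v_3$, are at $T$-distance $2$. Hence two distinct colors need $a_j=1$, which for a non-decreasing sequence means $a_1=a_2=1$, i.e.\ $a_2=1$. Conversely, if $a_2=1$ the pattern $[1,2,3]$ works, since the repeated colors are then $1$ and $2$.

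Two smaller slips, both harmless:
\begin{enumerate}[$(1)$]
\item $C_3$ has only $6$ elements, not at least $7$. You treat it separately, so nothing breaks.
\item Your description of $T(P_n)$ omits that incident edges $v_{i-1}v_i$ and $v_iv_{i+1}$ are also adjacent at sequence distance $2$. Your check ``$2\not\equiv 0 \pmod 3$'' already covers every pair two steps apart.
\end{enumerate}
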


\begin{proof}
        Let $S=(a_1,a_2, a_3,\dots)$ be a non-decreasing sequence of positive integers and let $G$ be a connected graph with $\chi_S^{''}(G) = 3$. Based on Observation \ref{obs3}, we know that $3 = \chi_S^{''}(G) \geq \Delta(G)+1$, which implies that $\Delta(G) \leq 2$. Since $G$ is a connected graph, it is a path or a cycle. Moreover, from the proof of Prop.~\ref{prop_1_3} it follows that $G$ has at least one edge, which means, that it is a path with at least $2$ vertices or a cycle.
    
    First, consider the case when $a_3=1$, and consequently, $a_1=a_2=1$.
    If $G$ is a path $P_n$, where $n \geq 2$, then there exists a $(1^3)$-packing total coloring $c$ given by a periodic pattern $[1,2,3]$, hence $\chi_S^{''}(P_n) = 3$ for any $n \geq 3$.
    Moreover, this coloring is unique up to the permutation of colors. Indeed, for an edge $e=xy$ and its endpoints $x$ and $y$ we need three different colors, without loss of generality we can have $c(x)=1, c(e)=2$ and $c(y)=3$, which determines the $S$-packing total coloring of all other uncolored objects.
    Hence, if $G$ is a cycle $C_n$, where $n \geq 3$, then it is $S$-packing total colorable with $3$ colors if and only if $|V(C_n) \cup E(C_n)| = 2n$ is divisible by $3$. Note that this condition means that $n \equiv 0 \ (\mathrm{mod} \ 3)$. Clearly, such cycle can be colored by the periodic pattern $[1,2,3]$.

    Next, assume that $a_3 \geq 2$ and $a_2=1$. If $G$ contains (at least) two vertices of degree $2$ (this means that $G$ is a path with at least $4$ vertices or a cycle), it is easy to see that it cannot be $S$-packing total colorable using only three colors. Hence, $G$ is a path of order $3$ or $2$. Clearly, such path can be colored by the periodic pattern $[1,2,3]$.

    Now, let $a_3 \geq 2$ and $a_2 \geq 2$. If $G$ contains a vertex with degree $2$, it cannot be $S$-packing total colorable using only three colors, which implies that $G$ is isomorphic to $P_2$. Note that this holds if $a_1 \geq 2$ and also in the case when $a_1=1$. This concludes the proof. 
    
\qed 
\end{proof}

\section{$S$-packing total colorings of complete bipartite graphs}

We next focus on complete bipartite graphs and determine their S-packing total chromatic numbers for all sequences S consisting only of the integers $1$ and $2$.

In the case of classical coloring ($S=(1,1,1,1,\ldots, 1)$), the values for the $S$-packing total chromatic numbers of complete bipartite graphs are known. Namely, if $m=n$, then $\chi_S^{''}(K_{m,n})=n+2$ and otherwise, $\chi_S^{''}(K_{m,n})=n+1$~\cite{behzad}. Note that this result also implies that if $m=n$, then $\chi_S^{''}(K_{m,n})=n+2$, and otherwise, $\chi_S^{''}(K_{m,n})=n+1$, when $S=(1^{n+2},\ldots)$, that is, when at least the first $n+2$ elements of $S$ are equal to $1$. Consequently, in the following theorem, we consider only sequences $S$ that do not satisfy this condition.

\begin{theorem} \label{prp: complete}
    Let $S=(a_1,a_2,\dots)$ be a non-decreasing sequence of positive integers and let $K_{m,n}$, $1 \leq m \leq n$, be a complete bipartite graph. If $m<n$, let $D=\frac{n}{n-m}$. Then, the following holds.
    \begin{enumerate} [$(1)$]
        \item If $a_1=a_2=\ldots=a_{p}=1$, $a_{p+1}\ge 2$, where $1 \leq p \leq n+1$, \\ then
        $$ \chi_S^{''}(K_{m,n})=\left\{
	\begin{array}{ll}
        2m+m^2-p(m-1) ;\ m=n\\ 
		m+n+mn-p(n-1);\ m<n\ \textrm{and}\ p \leq D,\\
		m+mn- p(m-1);\ m<n\ \textrm{and}\ p > D.\ 
	\end{array}\right.
    $$

        \item If $a_1 \geq 2$, $\chi_S^{''}(K_{m,n}) = m+n+mn$.
    \end{enumerate}
\end{theorem}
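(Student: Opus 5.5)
The key fact is that $T(K_{m,n})$ has diameter at most $2$. Two vertices in the same part have a common neighbour. A vertex $u$ and an edge $ab$ not incident with $u$ are at distance $2$, since $u$ is adjacent to the endpoint of $ab$ lying in the other part. Two non-incident edges $ab$, $cd$ with $a,c\in V_1$, $b,d\in V_2$ are at distance $2$, since $a$ and $d$ are adjacent. Hence any colour $i$ with $a_i\ge 2$ can be used on at most one element. Part $(2)$ then follows directly from Observation~\ref{obs3}: $K_{m,n}$ is connected and $a_1\ge 2\ge \max d(A,B)$, so $\chi_S''=m+n+mn$.

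For part $(1)$, only the colours $1,\dots,p$ can repeat, and each of them is an independent set of $T(K_{m,n})$, i.e.\ a set of pairwise non-adjacent, non-incident vertices and edges. If these classes have sizes $s_1,\dots,s_p$, then the optimal number of colours is
\[
N-\sum_{i=1}^{p}(s_i-1), \qquad N=m+n+mn .
\]
We colour every remaining element with its own colour $p+1,p+2,\dots$, which is valid because $S$ is non-decreasing. So the task is to maximise $\sum s_i$ over $p$ pairwise disjoint total independent sets.

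First I classify total independent sets. The vertices in such a set lie in a single part, say a set $B\subseteq V_2$ or a set $A\subseteq V_1$. The edges form a matching that avoids $B$ (respectively $A$). Hence a class on the $V_2$ side has size at most $|B|+\min(m,n-|B|)$, and a class on the $V_1$ side has size at most $|A|+\min(n,m-|A|)\le m$.

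For the upper bound on $\sum s_i$, each class has size at most $n$, and each class also has size at most $m+|B_i|$, where $B_i$ is its set of $V_2$-vertices; $V_1$-side classes satisfy the even stronger bound $\le m$. Since the sets $B_i$ are disjoint, $\sum|B_i|\le n$. This gives $\sum s_i\le\min(pn,\;n+pm)$. The inequality $pn\le n+pm$ holds exactly when $p(n-m)\le n$, i.e.\ $p\le D$, which yields the two lower bounds in the case $m<n$. For $m=n$ every class has size at most $m$, which gives $\chi_S''\ge N-p(m-1)=2m+m^2-p(m-1)$.

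For matching constructions, the case $m=n$ is easy. The sets $V_1$, $V_2$ and the $n$ perfect matchings $M_j=\{x_iy_{i+j}\}$ (indices mod $n$) give $n+2\ge p$ disjoint classes of size $m$, and I take any $p$ of them. For $m<n$ and $p\le D$, I partition a subset of $V_2$ into blocks $B_1,\dots,B_p$ with $|B_i|\ge n-m$. I then find pairwise edge-disjoint matchings $M_i$ that saturate exactly $V_2\setminus B_i$ and have size $n-|B_i|\le m$. For this I would use cyclic matchings $\{x_iy_{i+j}\}$ with $V_2$ labelled by $\mathbb Z_n$, choosing the blocks $B_i$ as cyclic intervals of consecutive labels so that each $M_i$ is a piece of a distinct shift. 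For $p>D$, I split $V_2$ into blocks of size at most $n-m$ (some possibly empty) and attach to each block a matching of size $m$ saturating $V_1$ and avoiding that block. When $p=n+1$ there are not enough disjoint matchings of size $m$, and there I would use $V_1$ itself as one class.

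I expect this construction step for $m<n$ to be the main obstacle. The difficulty is guaranteeing edge-disjointness of the matchings while each $M_i$ avoids its prescribed block, especially at the boundary values $p=n+1$ and $p$ close to $D$. My first attempt is the cyclic-shift labelling above, falling back on König's edge-colouring theorem applied to suitable subgraphs of $K_{m,n}$ if the explicit labelling fails.
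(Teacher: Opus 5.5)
Your proposal is correct and follows essentially the same route as the paper. It uses the same bound $\sum_{i\le p}|c^{-1}(i)|\le\min\{pn,\,pm+n\}$ (with $\le pm$ when $m=n$), the same split at $p=D$, and the same constructions pairing blocks of $V_2$ of size about $n-m$ with matchings that avoid them. Your cyclic-shift labelling does go through. Shift $j$ saturates $V_1$ and misses exactly the cyclic interval $y_{j+m},\dots,y_{j+n-1}$, so blocks with distinct starting labels receive distinct shifts. For $p=n+1$, the singletons $\{y_k\}$ paired with shift $k-m$, together with the class $V_1$, work. This actually justifies edge-disjointness more cleanly than the paper's degree-counting remark. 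The paper, for its part, simply cites the classical value $\chi''(K_{m,n})=n+1$ for the case $p=n+1$.
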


\begin{proof}
    Let $S=(a_1,a_2,\dots)$ be a non-decreasing sequence of positive integers. Further, let $G=K_{m,n}$ be an arbitrary complete bipartite graph with bi-partition $(A,B)$, where $|A|=m$, $|B|=n$ and $1 \leq m \leq n$. Clearly, $|V(G)|=m+n$ and $|E(G)|=m\cdot n.$ We also know that for any two elements $a,b \in V(G) \cup E(G)$, $d(a,b) \leq 2$.  Thus, point (2) immediately follows.

    Now let $a_1=a_2=\ldots=a_p=1$ and $a_{p+1}\geq 2$, where $1 \leq p \leq n+1$. Denote by $c$ any $S$-packing total coloring of $G$. We notice that $|c^{-1}(j)| \leq 1$ for any $j \geq p+1$. 
    %
    Therefore, in order to determine the lower bound for $\chi_S^{''}(G)$, we consider the maximum number of elements from $V(G) \cup E(G)$ to which $c$ assigns colors $1, 2, \ldots, p$. %
    Let $i \in \{1,2,\ldots,p\}$ be an arbitrary integer. 
    If there does not exist a vertex from $V(G)$ colored with $i$ by $c$, then color $i$ can be assigned to at most $m$ pairwise non-incident edges of $G$. Otherwise, let $C_i=\{x \in V(G) \, | \, c(x)=i\} \neq \emptyset$. Clearly, $C_i \subseteq A$ or $C_i \subseteq B$.\\
    In the first case, $|A\setminus C_i|<|B|$, which means that at most $|A\setminus C_i|$ edges of $G$ receive a color $i$ by $c$. Therefore, $c$ assigns a color $i$ to at most $|C_i|+|A\setminus C_i|=|A|=m$ elements of $G$. \\
    Now, assume that $C_i \subseteq B$. In this case, $c$ assigns a color $i$ to at most $min\{|A|, |B \setminus C_i|\}$ edges of $G$, since any edge of $G$ colored by $i$  has one endpoint in $A$ and the other in $B \setminus C_i$. 
    We observe the following. If $|C_i| < |B|-|A|$, then $min\{|A|, |B \setminus C_i|\}=|A|$, which means that $c$ assigns color $i$ to at most $|C_i|+|A| < |B|-|A|+|A|=n$ elements of $G$. Otherwise, $|C_i| \geq |B|-|A|$ and $c$ assigns color $i$ to at most $|C_i|+|B \setminus C_i|=|B|=n$ elements of $G$.
    
    %
Based on these findings, we conclude that $c$ assigns each color $i \in \{1,2,\ldots,p\}$ to at most $n$ elements from $V(G) \cup E(G)$. Moreover, if $n>m$, then this value can be achieved only when $C_i \neq \emptyset$, $C_i \subseteq B$ and $|C_i| \geq |B|-|A|$. If $n=m$, then each color $i \in \{1,2,\ldots,p\}$ can be assigned to at most $n=m$ elements and this bound can be achieved in each of the cases; either $C_i = \emptyset$ or $C_i \neq \emptyset$ (and $C_i \subseteq A$ or $C_i \subseteq B$).

In continuation of the proof, we distinguish two cases. 

First, consider the case when $n=m$. This means that each color $i \in \{1,2,\ldots,p\}$ can be assigned to at most $n=m$ elements of $G$. Since $|A|=|B|=m$ and $|E(G)|=m^2$, we can have at most $\frac{2m+m^2}{m}=2+m$ distinct colors from $\{1,2,\ldots,p\}$ that achieve this bound. \\
%
%
Note that since $p\leq n+1=m+1$, we have $p<m+2$. Recall that each of the colors from $\{1,2,\ldots, p\}$ can be assigned to at most $m$ elements of $G$, while any other color to at most one element of $G$. Consequently, $\chi_S^{''}(G) \geq 2m+m^2-pm+p$. 
Moreover, we can form an $S$-packing total coloring of $G$ using $2m+m^2-pm+p$ colors. \\
First, let $p=1$. In this case color all vertices in $A$ using a color $1$. Then, color the remaining $m^2+m$ elements of $G$ using pairwise distinct colors from $\{2,3,\ldots,m^2+m+1\}$. Since in this way an $S$-packing total coloring of $G$ is formed, we conclude that $\chi_S^{''}(G)=m^2+m+1$. \\
If $p=2$, then color all vertices from $A$ with a color $1$ and all vertices from $B$ with a color $2$. Then, color the remaining $m^2$ elements of $G$ using pairwise distinct colors from $\{3,\ldots,m^2+2\}$. It is clear that the described coloring is an $S$-packing total coloring of $G$, hence $\chi_S^{''}(G)=m^2+2$ and our claim holds. \\
Finally, assume that $p \geq 3$. Since $1 \leq p-2 < m$, $E(G)$ contains disjoint sets $E_1, E_2, \ldots, E_{p-2}$ such that for each $i \in \{1,2,\ldots,p-2\}$, $E_i$ is a perfect matching consisting of $m$ edges. First, for each $i \in \{1,2,\ldots,p-2\}$, color all edges from $E_i$ with a color $i$. Then, color all vertices in $A$ using a color $p-1$ and all vertices from $B$ with a color $p$. Finally, color the remaining elements in $G$ with new, distinct colors from $\{p+1, p+2, \ldots, 2m+m^2-pm+p\}$. In this way, an $S$-packing total coloring of $G$ using $2m+m^2-pm+p$ colors is obtained, thus $\chi_S^{''}(G) = 2m+m^2-pm+p=2m+m^2-p(m-1)$.

  
%
   Now, assume that $n \neq m$ and consequently, $|B|-|A| \neq 0$. Let $D=\frac{|B|}{|B|-|A|}=\frac{n}{n-m}$. 
   Recall that $c$ assigns each color $i \in \{1,2,\ldots,p\}$ to at most $n$ elements of $G$ and this value can occur only when $C_i \neq \emptyset$ and $C_i \subseteq B$. Moreover, the mentioned bound can be achieved only when $|C_i|\geq|B|-|A|$. 
   Note that there is at most $ \lfloor D \rfloor$ colors $i$ for which we can form such $C_i$ that $|C_i|\geq|B|-|A|$. Without loss of generality assume that these colors are $1,2, \ldots, \lfloor D \rfloor $ if $p \geq \lfloor D \rfloor $, and $1,2,\ldots, p$ if $p \leq \lfloor D \rfloor $.
 As mentioned above, $c$ assigns each of these colors to at most $n$ elements of $G$. More precisely, each of these colors is assigned to $|C_i|$ vertices of $B$ and $|B \setminus C_i|$ edges of $G$.

 In order to determine $\chi_S^{''}(G)$, we now distinguish two cases.

 \textbf{Case 1. $p \leq D$}.\\
In this case, each of the colors from $\{1,2,\ldots,p\}$ can be assigned to at most $n$ elements of $G$ and any other color to at most one element of $G$. Hence, $\chi_S^{''}(G) \geq m+n+mn- pn+p$. \\
In order to prove that $\chi_S^{''}(G)=m+n+mn- pn+p$, we construct an $S$-packing total coloring $c$ of $G$ using $m+n+mn-pn+p$ colors. 
Let $|B|-|A|=k$. Since $p \leq D=\frac{n}{k}$, we have $pk \leq n=k+m$ and thus, $p \leq 1+\frac{m}{k}$. Consequently, $p \leq 1+m$. \\
%
Next, since $p \leq D$, there exist $p$ pairwise disjoint subsets of $B$, namely $C_1, C_2, \ldots, C_p$ such that $|C_i|\geq n-m$ for each $i \in \{1,2,\ldots, p\}$. Let $c$ assign color $i$ to all vertices from $C_i$ for any $i \in \{1,2,\ldots, p\}$. Further, since there is $|B \setminus C_i|$ pairwise non-incident edges with one endpoint in $A$ and another in $B \setminus C_i$ ($|A| \geq |B \setminus C_i|$), color all of them with a color $i$ for each $i \in \{1,2,\ldots, p\}$. Note that there exist disjoint sets of such edges for all colors from $\{1,2,\ldots, p\}$. Indeed, applying the presented coloring, each vertex $a \in A$ is an endpoint of at most $p$ colored edges, where $p \leq m+1 \leq n=deg_G(a)$. Similarly, after applying the presented coloring, each vertex $b \in B$ is an endpoint of at most $p-1$ colored edges, where $p-1 \leq m =deg_G(b)$. Hence, such edges exist. 
%
%
%
Note that in this way, each of the colors from $\{1,2,\ldots, p\}$ is assigned to exactly $n$ elements of $G$. Further, let $c$ assign new, distinct colors to the remaining elements of $G$ (note that any of these new colors can be assigned only to one element of $G$). This means that $c$ uses at most $m+n+mn-pn+p$ colors, which confirms that $\chi_S^{''}(G) = m+n+mn- pn+p$. 
%

%
%
\textbf{Case 2. $p > D$}.\\
Note that if $p=n+1$, then it is already known that $\chi_S^{''}(G) =n+1$ and our claim holds. Therefore, in continuation of the proof we assume that $p \leq n$. 
Recall that for any $i \in \{1,2,\ldots, p\}$, $c$ assigns a color $i$ to at most $n$ elements of $G$. More precisely, $|c^{-1}(i)|\leq m+|C_i \cap B|$. Indeed, if $C_i \subseteq A$, then at most $m$ elements of $G$ receive a color $i$, and otherwise to at most $|C_i|$ vertices in $B$ and $m$ edges of $G$ is assigned a color $i$. Since $C_i \cap C_j = \emptyset$, for any distinct sets $C_i, C_j \subseteq B$, we also have $\sum_{i=1}^p|C_i \cap B| \leq n$. Hence, $\sum_{i=1}^p|c^{-1}(i)| \leq pm+n$. Additionally, since for each color $i$ we know that $|c^{-1}(i)|\leq n$, we have $\sum_{i=1}^p|c^{-1}(i)| \leq \min\{pn,\ pm+n\}$. Further, since $p>D$, we know that $pn \geq pm+n$, thus $\sum_{i=1}^p|c^{-1}(i)| \leq pm+n$. Therefore, $c$ uses at least $m+n+mn-(pm+n)+p$ colors, which means that $\chi_S^{''}(G) \geq m+mn- pm+p$. \\
In order to prove that  $\chi_S^{''}(G) = m+mn- pm+p$, we form an $S$-packing total coloring $c$ of $G$ using $m+mn-pm+p$ colors. 
First, note that there exist $\lfloor D \rfloor$ pairwise disjoint subsets of $B$, namely $C_1, C_2, \ldots, C_{\lfloor D \rfloor}$ such that $|C_i|=n-m$ for each $i \in \{1,2,\ldots, \lfloor D \rfloor\}$. Since $p>D$, and hence $p\geq \lfloor D\rfloor+1$, let $C_{\lfloor D \rfloor+1}=B \setminus \bigcup_{i=1}^{\lfloor D \rfloor}C_i$. Further, let $c$ assign a color $i$ to all vertices from $C_i$ for any $i \in \{1,2,\ldots, \lfloor D \rfloor, \lfloor D \rfloor+1\}$.
Next, since for each $i \in \{1,2,\ldots, \lfloor D \rfloor\}$ there is $m$ pairwise non-incident edges with one endpoint in $A$ and another in $B \setminus C_i$ (recall that $|A|=|B \setminus C_i|=m$), color all of them with a color $i$. For the color $\lfloor D\rfloor+1$, observe that $\lfloor D\rfloor(n-m)\geq (D-1)(n-m)=D(n-m)-(n-m)=n-n+m=m$. Consequently, $|B\setminus C_{\lfloor D\rfloor+1}|=\lfloor D\rfloor(n-m)\geq m$, and thus there are $m$ pairwise non-incident edges with one endpoint in $A$ and another in $B\setminus C_{\lfloor D\rfloor+1}$. Color these edges with color $\lfloor D\rfloor+1$.
Note that there exist disjoint sets of such edges for all colors from $\{1,2,\ldots, \lfloor D \rfloor+1\}$, since we have already colored $(\lfloor D \rfloor +1)m$ edges, but $G$ contains $nm \geq pm \geq (D+1)m \geq (\lfloor D \rfloor+1)m$ edges. 
%
%
%
If $p \geq \lfloor D \rfloor\ + 2$, then let each of the colors  $\lfloor D \rfloor\ + 2, \ldots, p$ be assigned to pairwise non-incident $m$ edges of $G$. Note that in this way we color $pm$ edges and $|E(G)|=mn \geq pm$. Finally, let the remaining elements of $G$ receive pairwise distinct colors $p+1, \ldots$ The described coloring is an $S$- packing total coloring of $G$, which uses $m+mn-pm+p$ colors. 

    \qed
\end{proof} 

\section{$S$-packing total colorings of infinite path}

In this section, we consider $S$-packing total colorings of infinite path. First, recall that the distance graph $ G(\mathbb{Z}, \{1,2\})$, or shorter $G(1,2)$, is the infinite graph with $\mathbb{Z}=\{\ldots, -2, -1, 0, 1, 2, \ldots\}$ as the vertex set, while vertices $x$ and $y$ are adjacent if and only if $ |x-y| \in \{1,2\}$. We observe that the total graph of the infinite path, $T(P_\infty)$, is isomorphic to the distance graph $G(1,2)$, hence the following results from~\cite{ben, holub} holds for $\chi_S^{''}(P_\infty)$.

\begin{theorem}[\cite{ben, holub}]
\label{infinite_path_known}
\hfill
    \begin{enumerate}
         \item If $S=(1,1,1, \ldots)$, then $\chi_S^{''}(P_\infty)=3$. 
          \item If $S=(1,1,2,2, \ldots)$, then $\chi_S^{''}(P_\infty)=4$.
         \item If $S=(1,2,2,2,2, \ldots)$, then $\chi_S^{''}(P_\infty)=5$.
         \item If $S=(2,2,2,2,2, \ldots)$, then $\chi_S^{''}(P_\infty)=5$.
    \end{enumerate}
\end{theorem}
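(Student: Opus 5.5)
The approach is to work in the distance graph directly, using $T(P_\infty)\cong G(1,2)$. Label the elements of $P_\infty$ by integers so that vertex $v_i$ gets $2i$ and edge $v_iv_{i+1}$ gets $2i+1$. Then the distance between two elements equals $\lceil |x-y|/2\rceil$, where $x,y$ are their labels. So a color $i$ with $a_i=1$ may be repeated only at labels differing by at least $3$, and a color with $a_i=2$ only at labels differing by at least $5$. Each item then needs a lower bound and a periodic pattern applied along $\ldots,v_0,e_0,v_1,e_1,\ldots$.

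\textbf{Items 1 and 4.} Any three consecutive labels are pairwise at distance $1$, so at least $3$ colors are needed, as in Proposition~\ref{prop_1_3}. For $S=(1,1,1,\ldots)$ the pattern $[1,2,3]$ repeats each color at label gap exactly $3$, which settles item 1. For $S=(2,2,\ldots)$, any five consecutive labels are pairwise at label gap at most $4$, so they need five distinct colors. The pattern $[1,2,3,4,5]$ repeats each color at gap exactly $5$, which settles item 4.

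\textbf{Item 2.} For the lower bound, suppose a $3$-coloring exists. Consecutive triples are cliques, so the coloring is forced to be periodic with period $3$, exactly as in the uniqueness argument of Proposition~\ref{prp: chi3}. Then color $3$ recurs at gap $3<5$, violating $a_3=2$. For the upper bound use $[1,2,3,1,2,4]$. Colors $1,2$ recur at gaps $3$ and $3$, and colors $3,4$ recur at gap $6\ge 5$.

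\textbf{Item 3.} Here the lower bound needs a counting argument. Take any window of $15$ consecutive labels. Color $1$ (with $a_1=1$) occupies at most $5$ of them. Each of colors $2,3,4$ (with $a_i=2$) occupies at most $3$. Four colors therefore cover at most $5+9=14<15$ labels, so at least five colors are needed. For the upper bound use $[1,2,3,1,4,5]$. Color $1$ recurs at gaps $3,3$, and every other color recurs at gap $6\ge5$.

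The only step that is not a direct check is this density bound in item 3. I expect it to be the main obstacle, in the sense that one must choose a window length for which the per-color maxima sum to less than the window. The choice $15=\mathrm{lcm}(3,5)$ works. Everything else is verification of the patterns under the label-gap criterion.
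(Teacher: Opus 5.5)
Your proof is correct, and it takes a different route from the paper, which gives no argument of its own for this theorem. The paper only notes that $T(P_\infty)\cong G(1,2)$ and imports the four values from the literature on $S$-packing colorings of that distance graph \cite{ben, holub}.

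You make the isomorphism explicit: labels $2i$ and $2i+1$ give distance $\lceil |x-y|/2\rceil$, so a color may repeat only at label gap at least $3$ when $a_i=1$ and at least $5$ when $a_i=2$. You then verify each item directly:
\begin{itemize}
\item For item 1, consecutive triples are cliques, which gives the lower bound.
\item For item 2, consecutive triples force period $3$, and color $3$ then repeats at gap $3<5$.
\item For item 4, five consecutive labels form a clique.
\item For item 3, a window of $15$ labels holds at most $5+3\cdot 3=14$ colored labels with four colors.
\item The patterns $[1,2,3]$, $[1,2,3,1,2,4]$, $[1,2,3,1,4,5]$ and $[1,2,3,4,5]$ all satisfy the gap conditions, which gives the upper bounds.
\end{itemize}

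The citation is shorter and places the result within a body of work on distance graphs that covers many more sequences. Your version is self-contained and elementary. Your window count is just a finite form of the density bound the paper proves right afterwards as Proposition~\ref{prp: ZZsum}, since $\frac13+\frac35=\frac{14}{15}<1$. That proposition also gives the lower bounds in items 2 and 4, via $\frac{13}{15}<1$ and $\frac45<1$. So the paper's own tools, together with your patterns, would already prove this theorem without appeal to the literature.
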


Additionally to the first claim of this theorem, it follows immediately from Prop.~\ref{prp: chi3} that the sequence $S=(1,1,1,\ldots)$ is the only sequence for which $\chi_S^{''}(P_\infty)=3$. In the continuation, we also prove that the sequence $S=(1,1,2,2,\ldots)$ is the only sequence for which $\chi_S^{''}(P_\infty)=4$.

In the proofs, we will adopt the following notation. For the infinite path $P_\infty$ let $V(P_\infty)=\{\ldots,-2,-1,0,1,2,\ldots\}$ and let $e_i$ denote the edge between vertices $i$ and $j$ for every $i,j \in V(P_\infty)$ and $i < j$. We will also use the term \emph{consecutive elements/objects} of $P_\infty$, meaning consecutive elements of $V(P_\infty) \cup E(P_\infty)$ with respect to the natural ordering:
$\ldots, -2, e_{-2}, -1, e_{-1}, 0, e_0, 1, e_1, 2, e_2, \ldots $

\begin{proposition} \label{prp: ZZsum}
    Let $S=(a_1,a_2,\dots)$. If $\chi_S^{''}(P_\infty) \leq k$, then $ \sum_{i=1}^k \frac{1}{2a_i +1} \geq 1$.
\end{proposition}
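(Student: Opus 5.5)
We identify $T(P_\infty)$ with the distance graph $G(1,2)$ on $\mathbb{Z}$, as noted before Theorem~\ref{infinite_path_known}. Concretely, we index the consecutive elements $\ldots,-1,e_{-1},0,e_0,1,\ldots$ by integers, so that vertex $j$ becomes $2j$ and edge $e_j$ becomes $2j+1$. Under this identification, the distance between two elements of $P_\infty$ equals $\lceil |x-y|/2\rceil$, where $x,y$ are their integer labels. This is the distance in $G(1,2)$.

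The plan is a standard density (counting) argument. Suppose $c$ is an $S$-packing total coloring of $P_\infty$ with colors $\{1,\ldots,k\}$. Fix a color $i$. Any two elements colored $i$ have distance at least $a_i+1$, so their integer labels differ by at least $2a_i+1$: indeed $\lceil t/2\rceil\ge a_i+1$ forces $t\ge 2a_i+1$.

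Now take a window of $N$ consecutive integers $\{1,\ldots,N\}$. Labels of color $i$ inside it are pairwise at least $2a_i+1$ apart, so the window contains at most $\lceil N/(2a_i+1)\rceil\le N/(2a_i+1)+1$ of them. Every integer in the window receives some color, so
\[
N\le \sum_{i=1}^k \left(\frac{N}{2a_i+1}+1\right)=N\sum_{i=1}^k\frac{1}{2a_i+1}+k .
\]
Dividing by $N$ and letting $N\to\infty$ gives $\sum_{i=1}^k \frac{1}{2a_i+1}\ge 1$, since $k$ is fixed.

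The only step needing care is the distance translation between $P_\infty$ and integer labels, specifically $d=\lceil |x-y|/2\rceil$. I would verify it case by case for vertex--vertex, edge--edge, and vertex--edge pairs. In each case a shortest path in $T(P_\infty)$ advances by at most $2$ in label per step, and steps of $2$ are available between consecutive vertices and between consecutive edges. After that, the counting is routine.
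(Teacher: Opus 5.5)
Your proof is correct and takes essentially the same approach as the paper. Both show that any two elements of color $i$ lie at least $2a_i+1$ positions apart in the natural ordering of $V(P_\infty)\cup E(P_\infty)$, and then sum the resulting densities $\frac{1}{2a_i+1}$. Your integer labeling and the window count with $N\to\infty$ make the paper's informal density step rigorous, and the labeling also handles color classes containing edges explicitly, where the paper only treats a vertex colored $i$.
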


\begin{proof}
    Let $S=(a_1,a_2,\dots)$ and consider any $S$-packing total coloring $c$ of $P_\infty$ using $k$ colors. 
    Further, let $c(x)=i$ for some vertex $x \in V(P_\infty)$ and some $i\in\{1,\ldots,k\}$.
    This implies that none of the vertices $x+1,\ldots,x+a_i$ can be assigned color $i$, since each of them is at distance at most $a_i$ from $x$. Moreover, none of the edges $e_x,e_{x+1},\ldots,e_{x+a_i-1}$ can receive a color $i$ by $c$, as each of them is also at distance at most $a_i$ from $x$.
    Therefore, among any $2a_i+1$ consecutive elements of $P_\infty$, at most one can receive color $i$ by $c$. Indeed, within any block consisting of $a_i+1$ consecutive vertices of $P_\infty$ together with the $a_i$ edges between them, to at most one element can be assigned color $i$ by $c$. This means that any color $i$ can be assigned by $c$ to at most $\frac{1}{2a_i +1}$ of all objects of $P_\infty$.
%
    Therefore, if there exists an $S$-packing total coloring of $P_\infty$ using $k$ colors, then each object of $P_\infty$ receives one of $k$ possible colors, hence $ \sum_{i=1}^k \frac{1}{2a_i +1} \geq 1$.
\qed    
\end{proof}

\begin{theorem} \label{prp: ZZ4}
    Let $S=(a_1,a_2, a_3, a_4, \dots)$. Then $\chi_S^{''}(P_\infty)=4$ if and only if $a_1=a_2=1$ and $a_3=a_4=2$.
\end{theorem}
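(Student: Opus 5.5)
The plan is to prove the two directions separately, using Theorem~\ref{infinite_path_known}, Proposition~\ref{prp: chi3} and Proposition~\ref{prp: ZZsum}.

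\emph{Sufficiency.} If $a_1=a_2=1$ and $a_3=a_4=2$, then item~2 of Theorem~\ref{infinite_path_known} gives $\chi_S''(P_\infty)=4$ directly.

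\emph{Necessity.} Assume $\chi_S''(P_\infty)=4$. Since $P_\infty$ contains every finite path as a subgraph, Observation~\ref{obs2} together with Proposition~\ref{prp: chi3} shows that $\chi_S''(P_\infty)=3$ holds exactly when $a_3=1$. Hence $a_3\ge 2$, and the goal is to show $a_1=a_2=1$ and $a_3=a_4=2$. By Proposition~\ref{prp: ZZsum},
\[
\sum_{i=1}^4 \frac{1}{2a_i+1}\ge 1 .
\]
We then run through the cases.
\begin{enumerate}[$(i)$]
\item If $a_2\ge 2$, the sum is at most $\frac13+\frac35=\frac{14}{15}<1$, a contradiction. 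So $a_2=1$, and therefore $a_1=1$.
\item With $a_1=a_2=1$, the first two terms contribute $\frac23$, so we need $\frac{1}{2a_3+1}+\frac{1}{2a_4+1}\ge\frac13$.
\item Since $a_3\ge2$: if $a_3\ge 3$, the left side is at most $\frac27<\frac13$, a contradiction. So $a_3=2$.
\item Then $\frac15+\frac{1}{2a_4+1}\ge\frac13$ forces $2a_4+1\le 7.5$, i.e.\ $a_4\le 3$. The case $a_4=2$ is the desired one.
\end{enumerate}

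\emph{The remaining case $S=(1,1,2,3,\dots)$.} The density bound gives $\frac23+\frac15+\frac17=\frac{106}{105}\ge 1$, so it does not exclude this case. This is the main obstacle, and it needs a direct combinatorial argument showing that the total graph $G(1,2)$ has no $(1,1,2,3)$-packing coloring.

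First, the slack is only $\frac1{105}$, so colors 1 and 2 must be used with density very close to $\frac13$ each. Near-saturation of color~1 means that gaps between consecutive 1's are almost always exactly 3 (a gap of 4 or more costs density), and likewise for color~2. So I would analyse the 1/2 pattern locally. Any three consecutive objects form a clique in $G(1,2)$, so they carry three distinct colors. Whenever colors 1 and 2 leave a slot at some position, it must be filled by color 3 or color 4.

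The key observation is this. If two consecutive windows of three both lack a 1 or a 2, the slots left for colors 3 and 4 come too close together. Concretely, colors 3 and 4 must jointly fill every position not covered by 1 or 2. Color 3 needs its occurrences at distance at least 3 apart and color 4 at distance at least 4 apart. If 1 and 2 never break their period-3 structure, the pattern $[1,2,x]$ forces every third object to receive 3 or 4. Two consecutive such objects are at distance 3, so both can be 3 but never both 4. A 4 must be followed at distance 3 by a 3, and the object after that at distance 3 must again be 3 or 4. A 3 at distance 3 from another 3 is allowed, so I need to check more carefully; the obstruction must come from density, since those positions need density $\frac13$ while colors 3 and 4 supply at most $\frac15+\frac17<\frac13$.

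So the count is: every deviation of colors 1 and 2 from perfect period-3 spacing costs density. I would formalise this with a discharging or window-counting argument on windows of length $105$, or more simply by a short case analysis of maximal runs between consecutive 1's. The aim is to show that the proportion of objects colored 3 or 4 is forced above $\frac15+\frac17$, or that colors 1 and 2 lose more than $\frac1{105}$ in density. If this becomes messy, the fallback is a finite exhaustive check: any valid coloring of a long enough segment, say 20 to 30 objects, would have to exist, and one shows by branching from a position colored 4 that no such segment can be colored.
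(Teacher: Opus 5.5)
Your sufficiency direction and your reduction to $a_1=a_2=1$, $a_3=2$, $a_4\in\{2,3\}$ are correct and match the paper. The one case that actually needs an argument, $S=(1,1,2,3,\ldots)$, is not proved. The sketch you give for it rests on two errors.

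\emph{Distances.} Index the consecutive objects $\ldots,x,e_x,x+1,e_{x+1},\ldots$ by integers (vertex $x\mapsto 2x$, edge $e_x\mapsto 2x+1$). Then $T(P_\infty)\cong G(1,2)$, and positions $i,j$ are at distance $\lceil |i-j|/2\rceil$. So objects three positions apart are at distance $2$, not $3$. Since $a_3=2$, two $3$'s there are \emph{forbidden}, contrary to your claim that such a repetition ``is allowed''.

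\emph{Density.} You write $\frac15+\frac17<\frac13$, but in fact $\frac15+\frac17=\frac{12}{35}>\frac13$. The difference is exactly your slack $\frac1{105}$, so the density argument you propose for the periodic $[1,2,x]$ structure yields no contradiction.

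The near-saturation heuristic and the discharging/window-counting plan are never turned into an argument. As written, necessity is therefore not established.

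The gap closes with the local branching you list only as a fallback, and it is short once distances are computed correctly.
\begin{enumerate}[$(1)$]
\item Since $a_3\ge 2$, no $3$-coloring exists (Proposition~\ref{prp: chi3} with Observation~\ref{obs2}). So some object, say at position $0$, receives color $4$.
\item Positions $1,\ldots,6$ lie within distance $\lceil 6/2\rceil=3=a_4$ of position $0$, so they use only colors $1,2,3$.
\item Any three consecutive positions form a clique. With only three colors, position $k+3$ must then repeat the color of position $k$, so positions $1,\ldots,6$ read $\alpha\beta\gamma\alpha\beta\gamma$.
\item Hence color $3$ appears at some positions $j$ and $j+3$. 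These are at distance $2\le a_3$, a contradiction.
\end{enumerate}
This is what the paper's three-case analysis on $c(e_x)$ and $c(x+1)$, after $c(x)=4$, carries out. The local structure is far more rigid than your sketch assumed, so no global density bookkeeping is needed.
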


\begin{proof}
    Let $S = (a_1,a_2,a_3,a_4)$ be an arbitrary non-decreasing sequence of positive integers such that $\chi_S^{''}(P_\infty)=4$. Prop.~\ref{prp: ZZsum} implies that $ \sum_{i=1}^4 \frac{1}{2a_i +1} \geq 1$. Note that if $a_1=a_2=a_3=1$, then the corresponding sequence admits a packing total coloring of $\mathbb{Z}$ in which the first three colors have packing constraint equal to one. Hence, we may assume that $a_3 \ge 2$ (and hence $a_4 \ge 2$). 
    Considering the condition $ \sum_{i=1}^4 \frac{1}{2a_i +1} \geq 1$, we observe the following. 
    \begin{itemize}
        \item If $a_2 \geq 2$, then $\sum_{i=1}^4 \frac{1}{2a_i +1} \leq \frac{1}{3} + 3 \cdot \frac{1}{5} = \frac{14}{15} < 1$.
        \item If $a_3 \geq 3$, then $\sum_{i=1}^4 \frac{1}{2a_i +1} \leq 2 \cdot \frac{1}{3} + 2 \cdot \frac{1}{7} = \frac{20}{21} < 1$.
        \item If $a_4 \geq 4$, then $\sum_{i=1}^4 \frac{1}{2a_i +1} \leq 2 \cdot \frac{1}{3} + \frac{1}{5} + \frac{1}{9} = \frac{44}{45} < 1$.
    \end{itemize}

These observations imply that $a_2=1$, $a_3=2$ and $a_4 \in \{2,3\}$. 
Therefore, we only need to consider the sequences $S_1 = (1,1,2,2)$ and $S_2 = (1,1,2,3)$, which means that we have only two candidates for sequence $S$ such that $\chi_S^{''}(P_\infty)=4$. Clearly, by Theorem~\ref{infinite_path_known}, $\chi_{S_1}^{''}(P_\infty)=4$. Hence it remains to show that this is the only sequence that admits an $S$-packing total coloring of $P_\infty$ using $4$ colors.
   %
%
    Suppose to the contrary that there exists an $S_2$-packing total coloring $c$ of $P_\infty$ (using $4$ colors).
    Since by Prop.~\ref{prp: chi3}, $\chi_{S_2}^{''}(P_\infty) \neq 3$, there exists a vertex $x$ or an edge $e_x$ to which $c$ assigns a color $4$. Let $c(x)=4$ and distinguish three cases according to the colors assigned by $c$ to $e_x$ and $x+1$.
    
    \noindent \textbf{Case 1:} $c(e_x)=3$. \\
    In this case, $c(x+1), c(e_{x+1}), c(x+2) \in \{1,2\}$, a contradiction to $c$ being a proper $S_2$-packing total coloring of $\ZZ$ using $4$ colors. 
    
    \noindent \textbf{Case 2:} $c(e_x) \in \{1,2\}$ and $c(x+1)=3$. \\
    This implies $c(e_{x+1}), c(x+2), c(e_{x+2}) \in \{1,2\}$, a contradiction.
    
    \noindent \textbf{Case 3:} $c(e_x), c(x+1) \in \{1,2\}$. \\
    In this case, $c(e_{x+1})=3$, which together with the coloring of $x, x+1$ and $e_x$ further implies that $c(x+2), c(e_{x+2}), c(x+3) \in \{1,2\}$, again contradiction to $c$ being a proper $S_2$-packing total coloring of $P_\infty$.
    
  Note that if there does not exist a vertex in $V(P_\infty)$ to which $c$ assigns a color $4$, there is an edge $e_x$ with $c(e_x)=4$. In this case, analogously as above we derive a contradiction to the assumption that $c$ is a proper $S_2$-packing total coloring of $P_\infty$. Hence, we conclude that there is no $(1,1,2,3)$-packing total coloring of $P_\infty$ and consequently, $\chi_S^{''}(P_\infty)=4$ if and only if $S = S_1 = (1,1,2,2)$.
\qed    
\end{proof}

We continue by determining all sequences $S$ for which $\chi_S^{''}(P_\infty)=5$. Note that by Theorem~\ref{infinite_path_known}, $(1,2,2,2,2)$ and $(2,2,2,2,2)$ are among them.

\begin{theorem} \label{prp: ZZ5}
    Let $S=(a_1,a_2,a_3,a_4, a_5, \ldots)$. Then $\chi_S^{''}(P_\infty)=5$ if and only if $S$ is one of the following sequences: $(1,2,2,2,2)$, $(2,2,2,2,2)$, $(1,2,2,2,3)$, $(1,1,2,3,3)$, $(1,1,3,3,3)$, $(1,1,2,3,4)$, $(1,1,2,4,4)$, $(1,1,3,3,4)$, $(1,1,3,4,4)$, $(1,1,4,4,4)$. 
\end{theorem}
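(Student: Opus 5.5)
The proof has two directions. To show that no other sequence works, we first reduce to sequences that are not componentwise dominated by $(1,1,1,\ldots)$ or $(1,1,2,2,\ldots)$. By Proposition~\ref{prp: chi3} and Theorem~\ref{prp: ZZ4}, $\chi_S''(P_\infty)\le 4$ holds exactly when $a_3=1$, or when $a_1=a_2=1$ and $a_3=a_4=2$. In those cases $\chi_S''(P_\infty)\ne 5$, so we may assume $S$ avoids them. Then Proposition~\ref{prp: ZZsum} gives the necessary condition $\sum_{i=1}^5 \frac{1}{2a_i+1}\ge 1$, and we enumerate the candidate 5-tuples by cases on $a_1,a_2$.
\begin{itemize}
\item If $a_1\ge 2$, then all $a_i\ge2$, so the sum is at most $5/5=1$ with equality only for $(2,2,2,2,2)$.
\item If $a_1=1$ and $a_2\ge 2$, we need $\frac13+\sum_{i=2}^5\frac{1}{2a_i+1}\ge1$, i.e.\ $\sum_{i=2}^5 \frac{1}{2a_i+1}\ge\frac23$. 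This forces $a_2=a_3=a_4=2$ (since $3\cdot\frac15+\frac17<\frac23$) and $a_5\in\{2,3\}$. Checking $a_5=4$: $\frac35+\frac19=\frac{32}{45}>\frac{30}{45}$, so $a_5=4$ passes the counting test too. I will therefore compute the bound carefully and expect leftover candidates such as $(1,2,2,2,4)$ that must be excluded by a local argument.
\item If $a_1=a_2=1$ and $a_3\ge 2$ (with $(a_3,a_4)\ne(2,2)$ by the reduction), we need $\sum_{i=3}^5\frac{1}{2a_i+1}\ge\frac13$. Enumerating gives the list in the statement together with some extra tuples, e.g.\ $(1,1,2,3,5)$, $(1,1,2,3,6)$, $(1,1,2,4,5)$, \ldots, since $\frac15+\frac17+\frac1{13}>\frac13$.
\end{itemize}
For each surviving candidate not in the list, a finite forcing argument will show that no coloring exists. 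Following the proof of Theorem~\ref{prp: ZZ4}, we fix an element receiving the rarest color, say $5$, and branch on the colors of the next few consecutive elements. The key fact is that two consecutive elements cannot both avoid colors $1,2$ for long: colors $1$ and $2$ each occupy at most every third element, so every window of three consecutive elements needs at least one color from $\{3,4,5\}$. The large distances then produce a contradiction within a bounded window.

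For sufficiency, it suffices to exhibit a periodic color pattern on the consecutive elements $\ldots,x,e_x,x+1,\ldots$ for each listed sequence. By Observation~\ref{obs1}, it is enough to do this for the componentwise maximal sequences in the list: $(1,2,2,2,3)$, $(2,2,2,2,2)$, and $(1,1,3,3,4)$, $(1,1,2,4,4)$, $(1,1,3,4,4)$, $(1,1,4,4,4)$. For instance, $(1,1,4,4,4)$ requires colors $1,2$ to each take density about $1/3$ and colors $3,4,5$ to each take density $1/9$. The pattern $[1,2,3,1,2,4,1,2,5]$ works with period $9$: colors $1,2$ recur at step $3>1$, and colors $3,4,5$ recur at step $9>4$. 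That gives the candidate; we then verify the remaining maximal sequences similarly. The lower bound $\chi_S''\ge5$ for the listed sequences follows from Theorem~\ref{prp: ZZ4} and Proposition~\ref{prp: chi3}, since none of them is dominated by $(1,1,2,2)$ or $(1,1,1)$.

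The main obstacle is the necessity part: the density condition leaves many candidate tuples (all the $(1,1,2,a_4,a_5)$ with small reciprocal-sum slack, and $(1,2,2,2,4)$), and each needs an exhaustive but careful local case analysis. I would organize the exclusions by the minimal excluded sequences, i.e.\ those componentwise smallest among the non-listed candidates, such as $(1,2,2,2,4)$, $(1,1,2,3,5)$ and $(1,1,2,4,5)$. By monotonicity (Observation~\ref{obs1}), excluding these suffices for every sequence above them.
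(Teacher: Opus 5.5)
\textbf{There is a genuine gap, and it cannot be closed: the theorem as stated is false.} Your plan is to exclude the ``minimal excluded'' sequences $(1,1,2,3,5)$ and $(1,1,2,4,5)$ by a forcing argument. No such argument exists, because both sequences are $5$-colorable.

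Index the consecutive elements of $P_\infty$ by $\ZZ$. Elements in positions $i,j$ are at distance $\lceil |i-j|/2\rceil$ in $T(P_\infty)\cong G(1,2)$, so color $k$ may recur only with step at least $2a_k+1$. Now consider the $12$-periodic pattern $[1,2,3,1,2,4,1,2,3,1,2,5]$:
\begin{itemize}
\item colors $1,2$ recur with step $3$;
\item color $3$ recurs with step $6$;
\item colors $4,5$ recur with step $12$.
\end{itemize}
Hence it is a $(1,1,2,5,5)$-packing total coloring. Since $a_3=2$ and $a_4\neq 2$, Theorem~\ref{prp: ZZ4} and Proposition~\ref{prp: chi3} give $\chi''_S(P_\infty)=5$ for $S=(1,1,2,5,5)$. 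By Observation~\ref{obs1} the same holds for $(1,1,2,3,5)$ and $(1,1,2,4,5)$. None of these three sequences is in the list.

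You were right that the density test leaves these tuples alive. The paper's Case~3 misses them because its bounds $\frac{95}{99}$ and $\frac{97}{99}$ plug in $a_3=4$ (resp.\ $a_3=a_4=4$) instead of the extremal value $a_3=2$. So the paper's proof is flawed at exactly this point.

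What does work for $a_1=a_2=1$ is the following reduction.
\begin{itemize}
\item Among any three consecutive elements, one receives a color from $\{3,4,5\}$. So consecutive such elements are at most $3$ apart.
\item Consequently, two occurrences of color $j\ge 3$ must be at least $k_j=\lceil (2a_j+1)/3\rceil$ apart in this subsequence.
\item Conversely, the layout $1,2,*,1,2,*,\ldots$ realizes any such schedule.
\end{itemize}
The resulting $3$-color problem on $\ZZ$ (with $k_3\ge 2$, $k_4\ge 3$) is solvable exactly for $(k_3,k_4,k_5)\in\{(2,3,3),(2,3,4),(2,4,4),(3,3,3)\}$. These are realized by the patterns $[3,4,3,5]$ and $[3,4,5]$, while $(2,3,5)$ fails by a short run argument. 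Translated back, this means $a_3=2$ with $3\le a_4\le a_5\le 5$, or $3\le a_3\le a_4\le a_5\le 4$. The truly minimal excluded tuples are therefore $(1,1,2,3,6)$ and $(1,1,3,3,5)$.

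\textbf{Density error in your second bullet.} The inequality $3\cdot\frac15+\frac17<\frac23$ is false: $\frac35+\frac17=\frac{26}{35}>\frac23$. In fact $(1,2,2,3,3)$ passes the density test, since $\frac13+\frac25+\frac27=\frac{107}{105}$. So density does not force $a_4=2$. This candidate needs its own local exclusion, which the paper supplies in Case~2.2. Six consecutive elements free of color $4$ are forced, up to reflection, to $2,1,3,5,1,2$; both neighbours then get color $4$; and propagating to the right leaves an element with no admissible color.

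\textbf{Missing forcing argument.} For $(1,2,2,2,4)$ you only promise a forcing argument. The paper gives one: a count over $8$ consecutive elements avoiding color $5$, split by whether the two middle elements use color $1$.

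\textbf{Sufficiency.} This part is fine. Note that $(1,1,3,3,4)$, $(1,1,2,4,4)$ and $(1,1,3,4,4)$ are not maximal, since all are dominated by $(1,1,4,4,4)$. A correct version of the theorem must add $(1,1,2,3,5)$, $(1,1,2,4,5)$ and $(1,1,2,5,5)$ to the list.
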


\begin{proof}
 Let $S = (a_1,a_2,a_3,a_4, a_5, \ldots)$ be an arbitrary non-decreasing sequence of positive integers such that $\chi_S^{''}(P_\infty)=5$. By Prop.~\ref{prp: ZZsum} we know that $ \sum_{i=1}^5 \frac{1}{2a_i +1} \geq 1$. We observe the following. If $a_1 \geq3$, then $\sum_{i=1}^5 \frac{1}{2a_i +1} \leq \frac{5}{7} <1$. This means that $a_1 \in \{1,2\}$. Next, if $a_2 \geq 3$, then $\sum_{i=1}^5 \frac{1}{2a_i +1} \leq \frac{1}{3} + \frac{4}{7} = \frac{19}{21}< 1.$ Hence, $a_2 \in \{1,2\}$. Now, we distinguish three cases regarding the values of $a_1$ and $a_2$. \\
\textbf{Case 1.} $a_1=a_2=2$ \\
If $a_3 \geq 3$, then $\sum_{i=1}^5 \frac{1}{2a_i +1} \leq \frac{2}{5} + \frac{3}{7} = \frac{29}{35}< 1$, which implies that $a_3=2$. Next, similarly we can conclude that $a_4=2$. Finally, if $a_5 \geq 3$, then $\sum_{i=1}^5 \frac{1}{2a_i +1} \leq \frac{4}{5} + \frac{1}{7}=\frac{33}{35} < 1$. Consequently, $a_5=2$ and we conclude that $S=(2,2,2,2,2)$. By Theorem~\ref{infinite_path_known}, in this case $\chi^{''}_S(P_\infty)=5$, hence $S=(2,2,2,2,2)$ is one of the appropriate sequences.
\\ 
\textbf{Case 2.} $a_1=1$, $a_2=2$ \\
If $a_3 \geq 3$, then $\sum_{i=1}^5 \frac{1}{2a_i +1} \leq \frac{1}{3}+ \frac{1}{5} + \frac{3}{7} = \frac{101}{105}< 1$, which implies that $a_3=2$. Next, $a_4 \geq 4$ implies $\sum_{i=1}^5 \frac{1}{2a_i +1} \leq \frac{1}{3}+ \frac{2}{5} + \frac{2}{9} = \frac{43}{45}< 1$, hence $a_4=2$ or $a_4=3$. We distinguish two sub-cases regarding the value of $a_4$.\\
\hspace*{1cm}\textbf{Case 2.1} $a_1=1$, $a_2=a_3=a_4=2$.  \\
If $a_5=2$, we have the sequence $S=(1,2,2,2,2)$ and Theorem~\ref{infinite_path_known} confirms that $\chi^{''}_S(P_\infty)=5$. Next, in the case when $a_5=3$, by coloring the consecutive elements of $P_\infty$ one after another using the pattern $[1, 2, 3, 1, 4, 5, 2, 1, 3, 4, 1, 2, 5, 3, 1, 4, 2, 1, 3, 5, 4]$, we form an $S$-packing total coloring of $P_\infty$ using $5$ colors. Hence, the sequence $S=(1,2,2,2,3)$ is one of the appropriate sequences.
Now, suppose that $a_5 \geq 4$ and that in this case $P_\infty$ admits an $S$-packing total coloring $c$ using $5$ colors. Since any two distinct elements of $P_\infty$, both colored by $5$ are at distance at least $5$, for some $i \in \ZZ$ we have the set $X$ containing the elements $i, e_i, i+1, e_{i+1}, i+2, e_{i+2}, i+3, e_{i+3}$ (note that the written sequence can also start with $e_i$ and end with $i+4$, but in this case the proof is analogous), to which $c$ assigns colors $1,2,3$ and $4$. We observe that at most three elements from $X$ can receive a color $1$ by $c$, while the colors $2$, $3$ and $4$ can be assigned each to at most $2$ elements from $X$. If $c(e_{i+1}), c(i+2) \in \{2,3,4\}$, then two of the colors from $\{2,3,4\}$ can be assigned each to only one element from $X$. Consequently, $c$ can assign colors to at most $7$ elements from $X$, but $|X|=8$, a contradiction. Next, if $c(e_{i+1})=1$, then $c$ must assign colors $2$,$3$ and $4$ to elements $e_i, i+1, i+2$ and $e_{i+2}$, which is not possible. Similarly we derive a contradiction also in the case when $c(i+2)=1$. Therefore, $c$ cannot be a proper $S$-packing total coloring of $P_\infty$ using $5$ colors.  
\\
\hspace*{1cm}\textbf{Case 2.2} $a_1=1$, $a_2=a_3=2$, $a_4=3$.  \\
If $a_5 \geq 4$, then $\sum_{i=1}^5 \frac{1}{2a_i +1} \leq \frac{1}{3}+ \frac{2}{5} + \frac{1}{7} + \frac{1}{9} = \frac{311}{315}< 1$. Hence $a_5=3$ and $S=(1,2,2,3,3)$. 
Suppose that in this case, $P_\infty$ admits an $S$-packing total coloring $c$ using $5$ colors. Note that any two distinct elements of $P_\infty$, both colored by $4$ are at distance at least $4$. This means that for some $i \in \ZZ$ we have the set $X$ containing the elements $i, e_i, i+1, e_{i+1}, i+2, e_{i+2}$ (note that the written sequence can also start with $e_i$ and end with $i+3$, but in this case the proof is analogous), to which $c$ assigns colors $1,2,3$ and $5$. We observe that only two elements from $X$ can receive a color $1$ by $c$, at most $2$ color $2$, at most $2$ color $3$ and at most one element color $5$. Additionally, $c$ assigns color $2$ or color $3$ to exactly $2$ elements from $X$. Without loss of generality suppose that two elements from $X$, namely $i$ and $e_{i+2}$ receive $2$ by $c$. Consequently, color $3$ can be assigned only to one element from $X$ and hence, color $1$ must be assigned to two elements of $X \setminus \{i, e_{i+2}\}$, namely $e_i$ and $i+2$. Finally, elements $i+1$ and $e_{i+1}$ receive colors $3$ and $5$ by $c$. Due to the symmetrical coloring we can say that $c(i+1)=3$ and $c(e_{i+1})=5$. Consequently, it is necessary that $c(e_{i-1})=c(i+3)=4$. If follows that $c(e_{i+3})=1$ or $c(e_{i+3})=3$. The first case yields that $c(i+4)=3$ and there is no color, which can be assigned by $c$ to $e_{i+4}$, a contradiction to $c$ being an $S$-packing coloring of $\ZZ$. In the second case it is necessary that $c(i+4)=1$, but also in this case there is no color, which can be assigned by $c$ to $e_{i+4}$. Hence, $c$ is not an $S$-packing coloring of $P_\infty$. \\
\textbf{Case 3.} $a_1=a_2=1$ \\
If $a_3 \geq 5$, then $\sum_{i=1}^5 \frac{1}{2a_i +1} \leq \frac{2}{3}+ \frac{3}{11} = \frac{31}{33}< 1$. Hence $a_3 \leq 4$. Similarly, we derive that $a_4 \leq 4$. Namely, if not, $\sum_{i=1}^5 \frac{1}{2a_i +1} \leq \frac{95}{99}<1$. Finally, if $a_5 \geq 5$, then $\sum_{i=1}^5 \frac{1}{2a_i +1} \leq \frac{97}{99}<1$. Therefore, $a_3, a_4, a_5 \leq 4$.
Note that for $S=(1,1,4,4,4)$ there exists an $S$-packing total coloring of $P_\infty$ using $5$ colors. Indeed, color the consecutive elements of $P_\infty$ one after another using the pattern $[1, 2, 3, 1, 2, 4, 1, 2, 5]$. This implies that in this case, $\chi^{''}_S(P_\infty)=5$. Moreover, using Observation~\ref{obs1} and Theorem~\ref{prp: ZZ4} we derive that $\chi^{''}_S(P_\infty)=5$ also in the cases when $S$ is one of the following sequence: $(1,1,3,4,4)$, $(1,1,3,3,4)$, $(1,1,2,3,4)$, $(1,1,2,4,4)$, $(1,1,3,3,3)$, or $(1,1,2,3,3)$. This concludes the proof. 
\qed    
\end{proof}

\section{$S$-packing total coloring of paths and cycles}

In this section, we determine the exact values of $S$-packing total chromatic numbers for all paths and cycles in the cases when the sequence $S$ contains only the integers $1$ and/or $2$.

Note that the case of $P_1$ is trivial, since $\chi_S^{\prime\prime}(P_1)=1$ for any sequence $S$. 
Next, from Prop. \ref{prp: chi3} we derive that $\chi_S^{\prime\prime}(P_2)=3$ for every sequence $S$.
Consequently, throughout this section we consider only paths $P_n$ and cycles $C_n$ with $n\geq 3$.

We denote the vertices of a path $P_n$ and a cycle $C_n$, where $n \geq 3$, by $v_1, v_2, v_3, \ldots, v_n$. Additionally, for any $i, j \in \{1,2,\ldots,n\}$, the edge between the vertices $v_i$ and $v_j$ is briefly denoted by $e_{i,j}$. In some proofs, we use the term \emph{consecutive elements} from $V(P_n) \cup E(P_n)$ or from $V(C_n) \cup E(C_n)$, which refers to consecutive elements in the sequence
$v_1$, $e_{1,2}$, $v_2$, $e_{2,3}$, $v_3$, $\ldots$, $v_n$ (and $e_{n,1}$ in the case of a cycle).


Based on Prop.~\ref{prp: chi3} we know that in the case when $S=(1^3,\dots)$, $\chi_S^{''}(P_n)=3$ for any $n \geq 2$. For this sequence $S=(1^3,\dots)$, with the following proposition, we consider the $S$-packing total chromatic number of cycles $C_n$.


\begin{proposition}
    Let $S=(1^3,2^{\infty})$ and $n \geq 3$. Then
    $$
    \chi_S^{''}(C_n)=\left\{
	\begin{array}{ll}
		3; \ n \equiv 0 \ (\mathrm{mod} \ 3),\\
		4; \ n \not \equiv 0 \ (\mathrm{mod} \ 3) \textit{ and } n \neq 4,\\
        5; \ n=4.
	\end{array}\right.
    $$
\end{proposition}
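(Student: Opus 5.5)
The three values are handled separately. The lower bounds come from Proposition~\ref{prp: chi3}, a counting argument, and a short case analysis for $C_4$; the upper bounds come from explicit periodic patterns on the cyclic sequence $v_1,e_{1,2},v_2,\ldots,v_n,e_{n,1}$ of $2n$ elements. In $T(C_n)$ two elements are at distance at most $2$ exactly when they are at most $2$ apart in this cyclic order (for $n\ge 3$). So with $S=(1,1,1,2,\ldots)$, colors $1,2,3$ only forbid cyclically consecutive repeats, while colors $\ge 4$ need cyclic gap at least $3$.

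\textbf{Case $n\equiv 0 \pmod 3$.} The value $3$ is exactly part $(1)$ of Proposition~\ref{prp: chi3}, since $a_3=1$.

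\textbf{Case $n\not\equiv 0\pmod 3$.} Proposition~\ref{prp: chi3} gives $\chi_S''(C_n)\ge 4$. For the upper bound I write $2n=3q+r$ with $r\in\{1,2\}$. If $r=1$, I use the pattern $[1,2,3]$ $q$ times followed by a single $4$. If $r=2$, I use $[1,2,3]$ $q-1$ times followed by the block $1,2,4,1,2,3,4$ of length $7$. In every case I check two things: cyclically consecutive elements differ, and two $4$'s are at cyclic gap $\ge 3$. For $r=2$ the two $4$'s sit at gap $4$ inside the block and at gap $3(q-1)+3$ around the rest of the cycle, which needs $q\ge 1$. The smallest cases, $n=5$ ($2n=10$, giving $1,2,3,1,2,4,1,2,3,4$) and $n=7$, I verify directly. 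For $n=4$ we have $2n=8$, $r=2$, $q=2$, and the two $4$'s end up at gap $4$ one way and $4$ the other way. That pattern works only if the color classes $1,2,3$ are also proper, and I will check that this fails near the wrap-around, which explains the exception.

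\textbf{Case $n=4$, lower bound.} This is the main obstacle: I must show that no $4$-coloring of the $8$-cycle $T$-structure exists. Every element of $C_4$ is within distance $2$ of all others except the antipodal one. Since there are $8$ elements and distances are at most $2$, the elements at distance $\ge 3$ from a given element are only those at cyclic gap $\ge 3$, namely the $3$ elements at gaps $3,4,5$. Thus color $4$ is used at most twice, and only on elements at gap $3$–$5$. Colors $1,2,3$ can each be used at most $4$ times (every other element). Counting alone is not enough, so I argue by structure. Suppose color $4$ occupies at most $2$ positions. The remaining $\ge 6$ positions split into at most $2$ cyclic arcs, which must be colored with $1,2,3$ with no consecutive repeats and with compatible boundary colors across each $4$.

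I split into subcases by the gap $g\in\{3,4,5\}$ between the two $4$'s, or by a single $4$. With a single $4$, a path of $7$ elements must be $3$-colored with both ends differing. This is possible, so I need to use the actual $T(C_4)$ distances more carefully. I suspect vertex–vertex antipodal pairs are at distance $2$, not $3$, since $d(v_1,v_3)=2$. That means color $4$ can appear only once, or twice only on antipodal edges, which are at distance $2$ and so not allowed either. Color $4$ is therefore a singleton, and then a $(1,1,1)$-type coloring of the remaining elements must fit. Colors $1,2,3$ also suffer from chords in $T(C_4)$: opposite vertices are at distance $2$, which is fine for $a_i=1$, but adjacency among edges $e_{1,2},e_{2,3}$ and so on already matches cyclic adjacency. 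So the real constraint is that the $7$-path plus the wrap-around must be $3$-colorable with no equal colors at distance $1$. I will finish by showing that the $3$-coloring of $T(C_4)$ minus one element is forced up to permutation, as in the uniqueness argument in Proposition~\ref{prp: chi3}, and leads to a conflict. For the upper bound $5$, I add a fifth color on a second element.
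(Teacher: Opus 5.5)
Your proposal has a genuine gap, and it starts with the distance model. In the cyclic order $v_1,e_{1,2},v_2,\ldots,v_n,e_{n,1}$, each element is adjacent in $T(C_n)$ to the elements at cyclic gap $1$ \emph{and} gap $2$ (e.g.\ $v_1v_2$ and $e_{1,2}e_{2,3}$). So $T(C_n)$ is the square of a $2n$-cycle, and two elements at cyclic gap $g$ are at distance $\lceil g/2\rceil$. Consequently, equal colors from $\{1,2,3\}$ need gap at least $3$ (not merely ``no consecutive repeats''), and color $4$ needs gap at least $5$ (not $3$).

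With the correct criterion, your $r=1$ coloring $[1,2,3]^q,4$ is valid: the wrap-around pairs are at gap $4$, and it is exactly the paper's construction for $n\equiv 2\pmod 3$. The $r=2$ case ($n\equiv 1\pmod 3$, $n\ge 7$), however, is not established, for two reasons:
\begin{itemize}
\item The length is wrong. $3(q-1)+7=2n+2$, and since $7\equiv 1\pmod 3$, no number of copies of $[1,2,3]$ plus that block can have length $2n\equiv 2\pmod 3$.
\item The placement of the $4$'s is wrong. The two $4$'s at gap $4$ inside $1,2,4,1,2,3,4$ are at distance $2$ in $T(C_n)$, which $a_4=2$ forbids. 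Your sample $1,2,3,1,2,4,1,2,3,4$ for $n=5$ puts $4$ on $e_{3,4}$ and $e_{5,1}$, which are at distance $2$; moreover, $n=5$ is really an $r=1$ case.
\end{itemize}
The case $n=7$ is left to ``verify directly'' with no coloring given. What is needed is two $4$'s at cyclic gap at least $5$ in both directions (impossible when $2n=8$), with the arcs between them colored periodically by $1,2,3$. The paper uses the $14$-element block $1,2,3,1,2,3,4,1,2,3,1,2,3,4$ followed by copies of $[1,2,3]$, which works for every $n\equiv 1\pmod 3$ with $n\ge 7$.

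For $n=4$, the same model error produces false counts: ``the antipodal element is at distance $\ge 3$'', color $4$ usable twice, colors $1,2,3$ usable four times. In fact $T(C_4)$ has diameter $2$ and independence number $2$, since equal colors need gap $\ge 3$ on a cycle of length $8$. Hence colors $\ge 4$ are singletons and colors $1,2,3$ appear at most twice each. Then $3\cdot 2+1=7<8$ gives $\chi_S^{''}(C_4)\ge 5$ by counting alone, which is the paper's argument. Your late-corrected structural idea is also valid: delete the unique $4$; the remaining seven consecutive elements are forced to be $a,b,c,a,b,c,a$, and the two outer $a$'s are at gap $2$ across the deleted element. But you only announce this argument rather than carry it out. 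For the upper bound, give an explicit coloring such as $1,2,3,1,2,3,4,5$ instead of ``add a fifth color on a second element''.
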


\begin{proof}
Let $S=(1^3,2^{\infty})$ and $n \geq 3$ an arbitrary integer. Prop.~\ref{prp: chi3} implies that $\chi_S^{\prime\prime}(C_n)=3$ if $n \equiv 0 \pmod{3}$, and $\chi_S^{\prime\prime}(C_n)>3$ otherwise. 

Now, let $n \not\equiv 0 \pmod{3}$ and $n \neq 4$. 
In order to prove that $\chi_S^{\prime\prime}(C_n) \leq 4$, we form an $S$-packing total coloring of $C_n$ as follows. 
In the case when $n \equiv 1 \pmod{3}$, color the elements from $V(C_n) \cup E(C_n)$ one after another by colors $1,2,3,1,2,3,4,1,2,3$, $1$, $2$, $3$, $4$ and, if necessary, the next remaining vertices color one after another by applying the pattern $[1,2,3]$.
If $n \equiv 2 \pmod{3}$, then color one vertex from the cycle $C_n$ by $4$ and the remaining elements from $V(C_n) \cup E(C_n)$ one after another using the following pattern of colors $[1,2,3]$. Note that the first presented coloring is a proper $S$-packing total coloring of a cycle $C_n$ for all $n \geq 7$, whereas the second one is valid for all $n \geq 5$. Hence, these colorings imply that $\chi_S^{\prime\prime}(C_n) =4$ if $n \not\equiv 0 \pmod{3}$ and $n \neq 4$.

It remains to consider the case when $n=4$. Since $V(C_4) \cup E(C_4)$ contains only elements which are pairwise at distance at most $2$, we derive that any $S$-packing total coloring of $C_4$ uses colors $1,2,3$ at most twice and any other color at most once. Then, the fact that $|V(C_4)|+|E(C_4)|=8$ implies that $\chi_S^{\prime\prime}(C_4) \geq 5$. By coloring the consecutive elements of $C_4$ by colors $1,2,3,1,2,3,4,5$, we form an $S$-packing total coloring of $C_4$ which proves that $\chi_S^{\prime\prime}(C_4) = 5$.
    \qed
\end{proof}

We continue with considering the sequence $S=(1,1,2^{\infty})$.

\begin{proposition}\label{prp:1^2,2^2 Pn}
    Let $S=(1^2,2^{\infty})$ and $n \geq 3$. Then
    $$
    \chi_S^{''}(P_n)=\left\{
	\begin{array}{ll}
		3; \ n=3,\\
		4; \ n \geq 4.
	\end{array}\right.
    $$
    \label{paths_1122}
\end{proposition}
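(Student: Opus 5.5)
The proof splits into a lower bound and an upper bound. First, for $n=3$: since $S=(1,1,2,\ldots)$ has $a_3=2\geq 2$ and $a_2=1$, part $(2)$ of Prop.~\ref{prp: chi3} gives $\chi_S^{''}(P_3)=3$ directly. For $n\geq 4$, the same part $(2)$ of Prop.~\ref{prp: chi3} shows that $\chi_S^{''}(P_n)\neq 3$. Since $P_n$ has an edge, Prop.~\ref{prop_1_3} gives $\chi_S^{''}(P_n)\geq 3$, and hence $\chi_S^{''}(P_n)\geq 4$.

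For the upper bound when $n\geq 4$, I will use Observation~\ref{obs2}. Since $P_n$ is a subgraph of $P_\infty$, we get $\chi_S^{''}(P_n)\leq\chi_S^{''}(P_\infty)$. Theorem~\ref{infinite_path_known} (or Theorem~\ref{prp: ZZ4}) gives $\chi_S^{''}(P_\infty)=4$ for $S=(1,1,2,2,\ldots)$, and every $S=(1^2,2^\infty)$ is of this form. Together with the lower bound this yields $\chi_S^{''}(P_n)=4$.

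To keep the argument self-contained, I would also exhibit an explicit $4$-coloring rather than only cite the result. A natural choice is to color the consecutive elements $v_1,e_{1,2},v_2,\ldots$ with the periodic pattern $[1,2,3,1,2,4]$. To verify it, note that colors $1$ and $2$ reappear every $3$ positions, which is distance $3>1$ in the total graph, since consecutive elements are at distance $1$. Colors $3$ and $4$ reappear every $6$ positions, which is distance $6>2$.

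The only subtle point is the lower bound for $n\geq 4$, which depends on the claim in part $(2)$ of Prop.~\ref{prp: chi3} that two vertices of degree $2$ force a fourth color. If I were to reprove it, I would note the following. The elements $v_1,e_{1,2},v_2$ must get three distinct colors. The element $e_{2,3}$ is at distance $1$ from $e_{1,2}$ and $v_2$ and at distance $2$ from $v_1$, so it must take the color of $v_1$, which must be color $1$ or $2$. The same argument repeated along $v_2,e_{2,3},v_3$ forces $v_3$ to take the color of $e_{1,2}$. Continuing to $e_{3,4}$, it must take the color of $v_2$. This produces an element colored with one of the colors $a_i\geq 2$ at distance $3$ only if that color was already placed, and a case check shows that some color with $a_i=2$ is repeated within distance $2$, which is a contradiction.
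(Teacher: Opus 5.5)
Your proof is correct, and the upper bound is the same as the paper's: Observation~\ref{obs2} applied to $P_n\subseteq P_\infty$, plus Theorem~\ref{infinite_path_known}. The lower bound takes a slightly different route. You read off $\chi_S''(P_n)\neq 3$ from Prop.~\ref{prp: chi3}$(2)$. The paper instead proves $\chi_S''(P_4)\geq 4$ directly and then passes to all $n\geq 4$ via Observation~\ref{obs2}. Its argument is that each of the disjoint triples $\{v_1,e_{1,2},v_2\}$ and $\{v_3,e_{3,4},v_4\}$ must contain color $3$. Neither occurrence can lie on $v_2$ or $v_3$, so the pairwise adjacent $v_2,e_{2,3},v_3$ would all need colors from $\{1,2\}$.

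Citing Prop.~\ref{prp: chi3} is formally fine, but the direction you need is only asserted there as ``easy to see''. So the paper's argument, or your propagation sketch, is what actually carries this step. Your sketch also closes more simply than you wrote it:
\begin{itemize}
\item The forced repeats $c(v_1)=c(e_{2,3})$ and $c(e_{1,2})=c(v_3)$ occur at distance $2$, so $\{c(v_1),c(e_{1,2})\}=\{1,2\}$.
\item Hence $c(v_2)=3$.
\item Then $e_{3,4}$ is forced to take color $3$ at distance $2$ from $v_2$, a contradiction, with no case check needed.
\end{itemize}

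There is one slip in your verification of $[1,2,3,1,2,4]$. Since $T(P_\infty)\cong G(1,2)$, two elements that are $k$ positions apart in the sequence $v_1,e_{1,2},v_2,\ldots$ are at distance $\lceil k/2\rceil$, not $k$. So colors $1,2$ recur at distance $2$ and colors $3,4$ at distance $3$. The coloring is still valid because $2>1$ and $3>2$, but the distances you state are wrong. The same confusion shows up in the ``at distance $3$'' of your last paragraph: there $v_2$ and $e_{3,4}$ are at distance $2$, which is exactly why the contradiction arises.
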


\begin{proof}
    Let $S=(1^2,2^{\infty})$. From Prop.~\ref{prp: chi3} it directly follows that 
   $ \chi_S^{''}(P_3)=3.$
    Now, suppose that there exists an $S$-packing total coloring $c$ of $P_4$ using only $3$ colors. Since $P_4$ contains two disjoint subgraphs isomorphic to $P_2$ and $\chi_S^{''}(P_2) = 3$, $c$ assigns color $3$ to two distinct elements from $(V(P_4) \cup E(P_4)) \setminus \{v_2,v_3,e_{2,3}\}$. Then all elements from $\{v_2,v_3,e_{2,3}\}$ receive colors from $\{1,2\}$ by $c$, a contradiction to $c$ being an $S$-packing total coloring of $P_4$ using $3$ colors. 
    Hence, $\chi_S^{''}(P_4) \geq 4$, and moreover, by Observation~\ref{obs2}, $\chi_S^{''}(P_n) \geq 4$ for any $n \geq 4$. Finally, using Observation~\ref{obs2} and Theorem~\ref{infinite_path_known} we derive that $\chi_S^{''}(P_n)=4$ for any $n \geq 4$.


\qed
\end{proof}

\begin{proposition}
    Let $S=(1^2,2^{\infty})$ and $n \geq 3$. Then
    $$
    \chi_S^{''}(C_n)=\left\{
	\begin{array}{ll}
		4; \ n \notin \{4,7\},\\
        5; \ n=7,\\
        6; \ n=4.
	\end{array}\right.
    $$
\end{proposition}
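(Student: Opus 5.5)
The plan rests on a reformulation that makes everything reduce to gap conditions on a cyclic sequence. For $n\ge 3$ the total graph $T(C_n)$ is the square of the cycle $C_{2n}$: list the consecutive elements $v_1,e_{1,2},v_2,\ldots,v_n,e_{n,1}$ cyclically, indexed by $\ZZ_{2n}$. Two elements whose indices differ cyclically by $g$, with $1\le g\le n$, are then at distance $\lceil g/2\rceil$. Hence, for $S=(1^2,2^{\infty})$, an $S$-packing total coloring of $C_n$ is a cyclic word of length $2n$ satisfying three conditions:
\begin{itemize}
\item[(i)] two occurrences of color $1$, or two occurrences of color $2$, are at cyclic gap at least $3$;
\item[(ii)] two occurrences of any color $j\ge 3$ are at cyclic gap at least $5$;
\item[(iii)] consequently, a color $j\ge 3$ can appear at most $\lfloor 2n/5\rfloor$ times, and colors $1,2$ at most $\lfloor 2n/3\rfloor$ times each.
\end{itemize}

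\textbf{Lower bounds.} For every $n\ge 3$ we have $\chi_S^{''}(C_n)\ge 4$ by Prop.~\ref{prp: chi3}(2), since $a_2=1$, $a_3\ge 2$ and $C_n$ is not $P_2$ or $P_3$.

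For $n=7$ there are $14$ elements. By (iii), four colors cover at most $2\cdot 4+2\cdot 2=12<14$ of them, so $\chi_S^{''}(C_7)\ge 5$.

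For $n=4$ there are $8$ elements. The maximal cyclic gap is $4<5$, so every color $j\ge 3$ is used at most once. Colors $1$ and $2$ are used at most $\lfloor 8/3\rfloor=2$ times each. Hence at least $8-4=4$ further colors are needed, giving $\chi_S^{''}(C_4)\ge 6$.

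\textbf{Upper bounds.} These are explicit cyclic words of length $2n$, and the check is only (i) and (ii), including the wrap-around.

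For $n=4$ I would take $1,2,3,1,2,4,5,6$.

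For $n=7$ I would use a word with one color $5$, for example the period-$6$ pattern on $12$ positions, adjusted by inserting $5$ and one more letter. I would check (i) and (ii) by hand.

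For general $n\notin\{4,7\}$ I would build the word by concatenating blocks whose internal gaps and junctions respect (i) and (ii). The basic block is $A=[1,2,3,1,2,4]$ of length $6$. It already handles all $n\equiv 0 \pmod 3$, since it is exactly the period used for $P_\infty$ (Theorem~\ref{infinite_path_known}).

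For the other residues I would find by a short search blocks $B$ of length $8$ and $C$ of length $10$. Each must begin with $1,2,3$ and end in a way compatible with the next block starting with $1,2,3$. A candidate to try first for $B$ is $[1,2,3,1,2,4,3?\ldots]$-type words with $3,4$ placed at positions of spacing at least $5$.

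Since $2n=6a+8b+10c$ with $a,b,c\ge 0$ covers every even number $\ge 6$ except $8$, the block approach would also produce $n=7$ via $6+8$. Therefore $B$ must be chosen so that it is compatible after $B$ or $C$ but not directly after a single $A$ when nothing else is present; alternatively I would restrict to representations using at least two blocks besides $A$ when $b=1$. Small cases $n=5,8,10,11,13$ I would simply write out explicitly, and then use $A$-insertions (adding $6$ to $2n$, i.e. $3$ to $n$) to reach all larger $n$ in each residue class.

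\textbf{Main obstacle.} The main obstacle is this constructive part for $n\equiv 1,2\pmod 3$. There I have to exhibit base words for a few small $n$ that close up cyclically, and show that inserting $A$ at a suitable place preserves (i) and (ii). I would first look for base words in which some window equals $1,2,3,1,2,4$; inserting a copy of $A$ there visibly keeps all gaps unchanged.
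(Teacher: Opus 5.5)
Your reformulation ($T(C_n)\cong C_{2n}^2$, so colors $1,2$ need cyclic gap at least $3$ and colors $j\ge 3$ need cyclic gap at least $5$) is correct. So are your three lower bounds and your coloring of $C_4$; these match the paper's counting arguments. The gap is in the upper bounds. You leave them unexecuted, and the two concrete ideas you give for them do not work.

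\textbf{The case $n=7$.} A word with a single color $5$ is impossible by your own (iii). Colors $1,\dots,4$ cover at most $4+4+2+2=12$ of the $14$ elements. So color $5$ must occur exactly twice, and every color is then used at full capacity. The paper simply applies $[1,2,3,1,2,4,5]$ twice.

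\textbf{The cases $n\equiv 1,2 \pmod 3$.} Nothing is actually constructed here, and the $8$-block plan is a dead end. All constraints involve gaps of at most $4$. Hence whether a block $X$ may be followed by a block $Y$ depends only on the last four letters of $X$ and the first four letters of $Y$. Your escape clause ``not directly after a single $A$ when nothing else is present'' is therefore not a meaningful restriction: $B$ either may follow $A$ or may not.

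Concretely, after $A=[1,2,3,1,2,4]$, a block beginning $1,2,3$ must continue with $1$. Suppose such a block $B=[1,2,3,1,\dots]$ may be followed by some block beginning $1,2,3,1$. Then $B$ may both follow and precede $A$. The cyclic word $AB$ of length $14$ would then be a $4$-coloring of $C_7$, contradicting your own lower bound.

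\textbf{What is missing.} No $8$-block is needed. The $10$-block $[1,2,3,1,4,2,1,3,2,4]$ (the paper's $c_2$) is valid as a cyclic word of length $10$, so it colors $C_5$. It also joins correctly with itself and with $A$ in either order. Moreover, $6a+10c$ represents every even number $\ge 6$ except $8$ and $14$. The decompositions are:
\begin{itemize}
\item $2n=10+6(k-1)$ for $n=3k+2$, $k\ge 1$;
\item $2n=20+6(k-3)$ for $n=3k+1$, $k\ge 3$.
\end{itemize}
Together these cover all $n\notin\{4,7\}$ with $n\not\equiv 0 \pmod 3$. This is exactly the paper's construction, and the two missing lengths match the two exceptional cycles $C_4$ and $C_7$.
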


\begin{proof}
    Let $S=(1^2,2^{\infty})$. First, consider the case when $n=4.$ The distance between any two elements from $V(C_4) \cup E(C_4)$ is at most $2$, hence any $S$-packing total coloring of $C_4$ uses each color $k\geq 3$ at most once, while colors $1$ and $2$ can be assigned each to at most two objects of $C_4$. Further, since $|V(C_4) \cup E(C_4)|=8$, $\chi_S^{''}(C_4) \geq 6$. By coloring the consecutive elements of $C_4$ by colors $1,2,3,1,2,4,5,6$, we form an $S$-packing total coloring of $C_4$ which proves that $\chi_S^{\prime\prime}(C_4) = 6$. 
    
    Now, consider a cycle $C_7$. Since $\diam(C_7)=4$, any $S$-packing total coloring of $C_7$ assigns colors $1$ and $2$ each to at most four elements of $C_7$, while any other color to at most two elements. Consequently, $\chi_S^{''}(C_7) \geq 5$. By applying the color pattern $[1,2,3,1,2,4,5]$ for coloring the consecutive elements of $C_7$, we obtain an $S$-packing total coloring of $C_7$ using $5$ colors. This means that $\chi_S^{''}(C_7) = 5$.
    
    Next, consider the case when $n \notin \{4,7\}$. It is clear from Prop.~\ref{prp: chi3} and Prop.~\ref{prp:1^2,2^2 Pn} that $\chi_S^{''}(C_n) \geq 4$. To show that $\chi_S^{''}(C_n) \leq 4$ for all $n\notin\{4,7\}$, we form an $S$-packing total coloring of each cycle $C_n$ using the following two color patterns: $c_1=[1,2,3,1,2,4]$ consisting of $6$ elements and $c_2=[1,2,3,1,4,2,1,3,2,4]$ consisting of $10$ elements. 
    We distinguish three cases with respect to the value of $n$.\\
    \indent \textbf{Case 1:} $n\equiv 0 \ (\mbox{mod } 3).$ \\
    In this case $|V(C_n)|+|E(C_n)|=6k$ and by applying the patten $c_1$ for coloring the consecutive elements of $C_n$, we form an $S$-packing total coloring of $C_n$ using $4$ colors.
    \indent \textbf{Case 2:} $n\equiv 1 \ (\mbox{mod } 3).$ \\
    Since $n \notin \{4,7\}$, $n=3k+1$ where $k \geq 3$. Therefore, $|V(C_n)|+|E(C_n)|=6k+2 =6(k-3)+20$. Hence by coloring the consecutive elements of $C_n$ by applying the pattern $c_2$ twice and then, if necessary, the pattern $c_1$, we obtain an $S$-packing total coloring of $C_n$.  \\
    \indent \textbf{Case 3:} $n\equiv 2 \ (\mbox{mod } 3).$ \\
    In this case, $n=3k+2$ and then, $|V(C_n)|+|E(C_n)|=6k+4 =6(k-1)+10$. This means that by applying the pattern $c_2$ once and $c_1$ more times if necessary, we obtain an $S$-packing total coloring of $C_n$ using $4$ colors, which concludes the proof.
\qed
\end{proof}

We now turn to the sequence $S = (1, 2^{\infty})$.

\begin{proposition}
    Let $S=(1,2^{\infty})$ and $n \geq 3$. Then
    $$
    \chi_S^{''}(P_n)=\left\{
	\begin{array}{ll}
        4; \ n=3,\\
		5; \ n \geq 4.
	\end{array}\right.
    $$
    \label{paths_1222}
\end{proposition}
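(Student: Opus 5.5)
The plan is to prove matching lower and upper bounds in each case, handling $P_3$ directly and then $n\ge 4$ using monotonicity under subgraphs (Observation~\ref{obs2}) together with Theorem~\ref{infinite_path_known}.

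\emph{The case $n=3$.} Since $a_2=2$ and $a_3=2\geq 2$, part (3) of Prop.~\ref{prp: chi3} applies. It says that $\chi_S^{''}(G)=3$ only for $G=P_2$, so $\chi_S^{''}(P_3)\geq 4$. For the upper bound I would color the consecutive elements $v_1,e_{1,2},v_2,e_{2,3},v_3$ by $1,2,3,4,1$. The only repeated color is $1$, at distance $4>a_1=1$, so this is an $S$-packing total coloring and $\chi_S^{''}(P_3)=4$.

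\emph{The case $n\ge 4$, upper bound.} The path $P_n$ is a subgraph of $P_\infty$. By Theorem~\ref{infinite_path_known}(3), $\chi_S^{''}(P_\infty)=5$ for $S=(1,2,2,2,2,\ldots)$. Observation~\ref{obs2} then gives $\chi_S^{''}(P_n)\le 5$ for every $n$.

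\emph{The case $n\ge 4$, lower bound.} By Observation~\ref{obs2} it suffices to show $\chi_S^{''}(P_4)\geq 5$. Suppose $c$ is an $S$-packing total coloring of $P_4$ with colors $\{1,2,3,4\}$, and consider the seven consecutive elements $v_1,e_{1,2},v_2,e_{2,3},v_3,e_{3,4},v_4$.

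First I bound how often each color can appear. Two elements at positions $i<j$ in this order are at distance $\lceil (j-i)/2\rceil$ in $T(P_4)$, since that graph is $G(1,2)$ restricted to seven consecutive integers.
\begin{itemize}
\item Colors $2,3,4$ need distance at least $3$, so their positions must differ by at least $5$. Each of them can therefore appear at most twice, and only on a pair of positions from $\{1,6\},\{1,7\},\{2,7\}$.
\item Color $1$ needs distance at least $2$, so positions must differ by at least $3$. Hence color $1$ appears at most $3$ times, and three occurrences force positions $\{1,4,7\}$.
\end{itemize}
Counting gives $7\le 3+2+2+2$, so this alone is not a contradiction and a finer argument is needed.

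Next I restrict the possible multiplicities. Any two of the twice-used pairs from $\{1,6\},\{1,7\},\{2,7\}$ intersect except $\{1,6\}$ and $\{2,7\}$. So at most two of the colors $2,3,4$ can be used twice, and then they occupy exactly positions $\{1,6\}$ and $\{2,7\}$. Every other color appears at most once. The total is then at most $3+2+2+1=8$, and the remaining cases are as follows.
\begin{itemize}
\item If two of the colors $2,3,4$ are used twice, positions $1,2,6,7$ are taken by them. The remaining positions $3,4,5$ are pairwise within difference $2$, so color $1$ can occupy only one of them. The third color from $\{2,3,4\}$ covers one more, so at most $6<7$ positions are covered.
\item If at most one of the colors $2,3,4$ is used twice, the total is at most $3+2+1+1=7$. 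Equality requires color $1$ on $\{1,4,7\}$ and one color twice on positions among $\{1,6\},\{1,7\},\{2,7\}$. Each of these pairs meets $\{1,7\}$, so it collides with color $1$, which is a contradiction.
\end{itemize}
Hence $\chi_S^{''}(P_4)\ge 5$, and so $\chi_S^{''}(P_n)=5$ for all $n\ge 4$.

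The main obstacle is this final case analysis on $P_4$, because the plain counting bound does not rule out four colors. The only delicate step is verifying the overlap argument above, and it is short.
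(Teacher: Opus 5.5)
Your proof is correct and takes essentially the paper's route: Prop.~\ref{prp: chi3} plus an explicit coloring for $P_3$, Observation~\ref{obs2} with Theorem~\ref{infinite_path_known} for the upper bound, and a counting contradiction on $P_4$ for the lower bound. The only differences are that you organize the $P_4$ case split around the admissible position pairs $\{1,6\},\{1,7\},\{2,7\}$ for colors $2,3,4$ rather than around which element of $\{v_2,e_{2,3},v_3\}$ receives color $1$, and that in your $P_3$ coloring the two elements colored $1$ ($v_1$ and $v_3$) are at distance $2$, not $4$ (their positions differ by $4$), which is harmless since $2>a_1=1$.
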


\begin{proof}
    Let $S=(1,2^{\infty})$ Using Observation~\ref{obs2} and Prop.~\ref{prp: chi3}, we derive that 
    $\chi_S^{''}(P_3)>3$. In order to prove that $\chi_S^{''}(P_3) \leq 4$, color the consecutive elements of $P_3$ one after another using colors $1,2,3,1,4$. Indeed, this coloring is an $S$-packing total coloring of $P_3$ using $4$ colors, thus $\chi_S^{''}(P_3)=4$. 
    
    For $P_4$ suppose that there exists an $S$-packing total coloring $c$ using only $4$ colors. We observe that if $c$ assigns a color from $\{2,3,4\}$ to the element from $\{v_2, v_3, e_{2,3}\}$, then this color can be used only once by $c$. Consequently, one element from $\{v_2, v_3, e_{2,3}\}$ receives a color $1$ by $c$ (and the other two receive colors from $\{2,3,4\}$). If $c(v_2)$=1 or $c(v_3)=1$, then $P_4$ contains at most two elements colored by $1$. Additionally, there are at most two elements colored by one of the colors from $\{2,3,4\}$, while the other two colors are assigned each to at most one element of $P_4$. Therefore, $c$ assigns a color to at most $6$ elements of $P_4$, a contradiction since $|V(P_4)|+|E(P_4)|=7$. Consequently, $c(e_{2,3})=1$ and $c(v_1)=c(v_4)=1$. In this case, the remaining three colors are assigned by $c$ to the remaining four elements from $V(P_4) \cup E(P_4)$, a contradiction to $c$ being an $S$-packing total coloring of $P_4$. This means that $\chi_S^{''}(P_4)>4$ and consequently, $\chi_S^{''}(P_n)>4$ for any $n \geq 4$. Finally, using Observation~\ref{obs2} and Theorem~\ref{infinite_path_known} we derive that $\chi_S^{''}(P_n)=5$ for any $n \geq 4$.

\qed
\end{proof}

\begin{proposition}
    Let $S=(1,2^{\infty})$ and $n \geq 3$. Then
    $$
    \chi_S^{''}(C_n)=\left\{
	\begin{array}{ll}
		5; \ n \notin \{4,7\},\\
        6; \ n=7,\\
        7; \ n=4.
	\end{array}\right.
    $$
\end{proposition}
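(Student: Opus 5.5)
The plan is to work in the total graph. Since $T(C_n)$ is the square of the cycle $C_{2n}$, I list the $2n$ consecutive elements $v_1,e_{1,2},v_2,\ldots,e_{n,1}$ cyclically. Two elements that are $k$ steps apart along this cyclic order are at distance $\lceil k/2\rceil$. For $S=(1,2^{\infty})$ this gives two constraints. Two elements colored $1$ must be at least $3$ steps apart. Two elements sharing any color $i\ge 2$ must be at least $5$ steps apart. Consequently color $1$ is used at most $\lfloor 2n/3\rfloor$ times and every other color at most $\lfloor 2n/5\rfloor$ times.

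\textbf{Lower bounds.} For $n\ge 4$, $C_n$ contains $P_4$, so Observation~\ref{obs2} and Prop.~\ref{paths_1222} give $\chi_S^{''}(C_n)\ge 5$. For $n=3$ there are $6$ elements. Color $1$ can be used at most twice and every other color at most once, since $\lfloor 6/5\rfloor=1$. Hence at least $5$ colors are needed. For $n=7$ there are $14$ elements. Color $1$ is used at most $4$ times and every other color at most $2$ times. With $5$ colors this covers at most $4+4\cdot 2=12<14$ elements, so at least $6$ colors are needed. For $n=4$ there are $8$ elements. Color $1$ is used at most $2$ times and every other color at most once. Covering all elements requires $2+(k-1)\ge 8$, so $k\ge 7$.

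\textbf{Upper bounds.} I use two blocks, $B_5=[1,2,3,4,5]$ of length $5$ and $B_6=[1,2,3,1,4,5]$ of length $6$. I concatenate $a$ copies of $B_5$ and $b$ copies of $B_6$ around the cycle so that $5a+6b=2n$. In every block each color in $\{2,\dots,5\}$ occurs exactly once, and its two neighbouring occurrences lie at least $5$ steps apart across every junction. This holds because every block begins with $1,2,3$ and ends with $4,5$.

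Color $1$ occurs at the start of each block and, inside $B_6$, also at step $3$. The gaps between consecutive $1$'s are therefore $5$, $3$ or $3$, all at least $3$. Since $\gcd(5,6)=1$ and the Frobenius number is $19$, every even number at least $10$ except $14$ has the form $5a+6b$; I also check $6=6$ directly. This covers all $n\notin\{4,7\}$ with $5$ colors.

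For $n=7$ I use $[1,2,3,1,4,5,6]$ twice. For $n=4$ I use $1,2,3,1,4,5,6,7$, where the two $1$'s are $3$ and $5$ steps apart cyclically.

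\textbf{Main obstacle.} The delicate point is verifying the junctions between blocks in the cyclic concatenation, especially where the sequence wraps around. Everything else is counting.
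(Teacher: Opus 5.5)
Your proof is correct and follows essentially the same route as the paper: the same counting lower bounds for $n\in\{3,4,7\}$ plus the $P_4$-subgraph bound for $n\ge 4$, and the same upper-bound construction by concatenating the blocks $[1,2,3,4,5]$ and $[1,2,3,1,4,5]$, with explicit colorings for $C_4$ and $C_7$. The paper obtains the decomposition $5a+6b=2n$ by taking $b=p$ copies of the six-block where $2n\equiv p \pmod 5$, which is the same thing as your Frobenius-number argument, and your reformulation via $T(C_n)\cong C_{2n}^2$ with the explicit junction check spells out verifications the paper leaves to the reader.
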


\begin{proof}
    Let $S=(1,2^{\infty})$. First, consider a cycle $C_3$. Since $|V(C_3)|+|E(C_3)|=6$ and any two distinct elements from $C_3$ are pairwise at distance at most $2$, any $S$-packing total coloring of $C_3$ assigns a color $1$ to at most two elements from $V(C_3) \cup E(C_3)$ and any other color to at most one element from the written set. Hence, $\chi_S^{''}(C_3) \geq 5$. On the other hand, by assigning a color $1$ to two elements from $V(C_3) \cup E(C_3)$ which are at distance $2$ (i.e., an edge and the opposite vertex) and four distinct colors to four remaining elements from $V(C_3) \cup E(C_3)$, we obtain an $S$-packing total coloring of $C_3$ using $5$ colors, which confirms that $\chi_S^{''}(C_3) = 5$.

    Next, let $n=4$. In this case $|V(C_4)|+|E(C_4)|=8$ and the distance between any two objects from $C_4$ is at most $2$. This means that any $S$-packing total coloring of $C_4$ assigns a color $1$ to at most two elements of $C_4$ and any other color to at most one element. Hence, $\chi_S^{''}(C_4) \geq 7$. By assigning a color $1$ to two non-adjacent vertices of $C_4$ and six distinct colors to the remaining elements, we obtain an $S$-packing total coloring of $C_4$ using $7$ colors, which confirms that $\chi_S^{''}(C_4) = 7$.

    Now, let $n \geq 5$. Since any $C_n$ contains a subgraph isomorphic to $P_n$, Observation~\ref{obs2} and Prop.~\ref{paths_1222} imply that $\chi_S^{''}(C_n) \geq 5$. To prove that $\chi_S^{''}(C_n) \leq 5$, we distinguish two cases with respect to the value of $|V(C_n)|+|E(C_n)|=2n$. \\
      \indent \textbf{Case 1.} $2n\equiv 0 \ (\mbox{mod } 5).$ \\
    By coloring the consecutive elements of $C_n$ one after another using the color pattern $[1,2,3,4,5]$ we obtain an $S$-packing total coloring of $C_n$ using $5$ colors, hence $\chi_S^{''}(C_n) = 5$. \\
      \indent \textbf{Case 2.} $2n\equiv p \ (\mbox{mod } 5)$, $p \in \{1,2,3,4\}$. \\
    In this case, color the consecutive elements of $C_n$ as follows: first, use the color pattern $[1,2,3,1,4,5]$ $p$-times and then (if necessary), continue with applying the pattern $[1,2,3,4,5]$. In this way, an $S$-packing total coloring of $C_n$ using $5$ colors is formed, hence $\chi_S^{''}(C_n) = 5$, except in the case when $n=7$. Indeed, if $n=7$, then $2n=14$ and $2n\equiv 4 \ (\mbox{mod } 5)$. Then, applying $[1,2,3,1,4,5]$ four times in not possible. Since for $C_7$, $2n=14$ and any two object are pairwise at distance at most $4$, any $S$-packing total coloring assigns a color $1$ to at most four elements and any other color to at most two elements. Hence, $\chi_S^{''}(C_7) \geq 6$. By coloring the consecutive elements of $C_7$ one after another using the pattern $[1,2,3,1,4,5,6]$, we obtain an $S$-packing total coloring of $C_7$ using $6$ colors, which confirms that $\chi_S^{''}(C_7) = 6$.
\qed
\end{proof}

Finally, the following two propositions address the case when $S=(2^{\infty})$.

\begin{proposition}
\label{prp_2222_paths}
    If $S=(2^{\infty})$, then
    $\chi_S^{''}(P_n)= 5$ for any $n \geq 3$.
\end{proposition}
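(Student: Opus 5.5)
The proof splits into an upper bound valid for every path and a lower bound for the smallest relevant case, $P_3$.

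\emph{Upper bound.} By Theorem~\ref{infinite_path_known}, $\chi_S''(P_\infty)=5$ for $S=(2^{\infty})$. Every $P_n$ is a subgraph of $P_\infty$, so Observation~\ref{obs2} gives $\chi_S''(P_n)\le 5$ for all $n$. Concretely, coloring the consecutive elements $v_1,e_{1,2},v_2,\ldots$ with the pattern $[1,2,3,4,5]$ works. Any two elements receiving the same color are five apart in the sequence, and the distance in $T(P_n)$ between the $j$-th and $(j+\ell)$-th elements is $\lceil \ell/2\rceil$. Hence equal colors are at distance $3>2$.

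\emph{Lower bound.} By Observation~\ref{obs2} it suffices to show $\chi_S''(P_3)\ge 5$. The elements of $P_3$ are $v_1,e_{1,2},v_2,e_{2,3},v_3$, five in total. We check that any two of them are at distance at most $2$: this is the maximum of $\lceil \ell/2\rceil$ over $\ell\le 4$, attained by $d(v_1,v_3)=2$. Since $a_i=2$ for every color $i$, each color can be used at most once. Therefore five colors are needed, so $\chi_S''(P_3)\ge 5$, and monotonicity gives $\chi_S''(P_n)\ge 5$ for all $n\ge 3$.

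Neither step is a real obstacle. The only point needing care is the distance formula in the total graph of a path, namely that consecutive-sequence offset $\ell$ corresponds to distance $\lceil \ell/2\rceil$. This follows directly from the definitions of vertex–edge and edge–edge distance.
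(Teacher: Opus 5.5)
Your proof is correct and follows essentially the same route as the paper. The lower bound comes from the five elements of $P_3$ being pairwise at distance at most $2$, extended to all $n\ge 3$ by Observation~\ref{obs2}, and the upper bound comes from Theorem~\ref{infinite_path_known}; your explicit pattern $[1,2,3,4,5]$, checked via the $\lceil \ell/2\rceil$ distance formula, is a self-contained addition that the paper leaves implicit.
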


\begin{proof}
    Let $S=(2^{\infty})$. First, consider a path $P_3$. Since any two elements from $V(P_3) \cup E(P_3)$ are pairwise at distance at most $2$ and $|V(P_3)| + |E(P_3)|=5$, we conclude that $\chi_S^{''}(P_3) \geq 5$. Consequently, $\chi_S^{''}(P_n) \geq 5$ for any $n \geq 3$. The upper bound follows directly from Theorem~\ref{infinite_path_known} and this concludes the proof. 
\qed
\end{proof}

\begin{proposition}
    Let $S=(2^{\infty})$ and $n \geq 3$. Then
    $$
    \chi_S^{''}(C_n)=\left\{
	\begin{array}{ll}
		5; \ n \equiv 0 \ (\mathrm{mod} \ 5),\\
		6; \ n \not \equiv 0 \ (\mathrm{mod} \ 5) \textit{ and } n \notin \{4,7\},\\
        7; \ n=7,\\
        8; \ n=4.
	\end{array}\right.
    $$
\end{proposition}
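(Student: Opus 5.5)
The total graph $T(C_n)$ is the circulant $C_{2n}(1,2)$. Its $2n$ elements are arranged cyclically as $v_1,e_{1,2},v_2,\ldots,v_n,e_{n,1}$, and two elements are at distance at most $2$ exactly when they lie within $4$ positions of each other in this cyclic order. So for $S=(2^{\infty})$ a color class is a set of positions pairwise at cyclic distance at least $5$, and each color is used at most $\lfloor 2n/5\rfloor$ times. The lower bound $\chi_S^{''}(C_n)\ge 5$ follows from Observation~\ref{obs2} and Prop.~\ref{prp_2222_paths}, since $C_n$ contains $P_n$ (for $n=3$, $C_3\supseteq P_3$).

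\textbf{Small cases.} For $n=4$, every pair of the $8$ elements is within distance $2$, so all colors are distinct and $\chi_S^{''}(C_4)=8$; this is also Observation~\ref{obs3}. For $n=3$ the same holds with $6$ elements, giving $6$, which agrees with the middle line of the statement. For $n=7$ there are $14$ elements and each class has size at most $\lfloor 14/5\rfloor=2$, so at least $7$ colors are needed. An explicit coloring with $7$ colors is the pattern $[1,\ldots,7]$ applied twice, because equal colors then lie exactly $7$ positions apart.

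\textbf{General upper bounds.} If $5\mid n$, then $5\mid 2n$ and the pattern $[1,2,3,4,5]$ works around the cycle, giving $5$. If $5\nmid n$ then $5\nmid 2n$, so $5$ colors cannot suffice: five classes of size at most $\lfloor 2n/5\rfloor$ cover fewer than $2n$ elements. For the bound $6$, write $2n=5a+6b$ with $a,b\ge 0$ and concatenate $a$ blocks $[1,2,3,4,5]$ and $b$ blocks $[1,2,3,4,5,6]$ cyclically. Equal colors are then always at cyclic distance $5$ or $6$, including across the wrap-around, because each block starts with $1$ and the block order is arbitrary. Such $a,b$ exist for every even $2n\ge 20$ by the Frobenius bound for $\{5,6\}$, which is $19$, and directly for the other even values $2n\in\{10,12,16,18\}$. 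The value $14$ is not representable, which is exactly why $n=7$ is exceptional; for $n=3$ the only solution is $b=1$, giving $6$.

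\textbf{Main obstacle.} The delicate part is the bookkeeping. I must check that the wrap-around between blocks of different lengths never brings two equal colors within distance $4$; this holds since in either block every color appears once and the gaps are at least $5$. I must also confirm that the only non-representable even numbers $\ge 6$ are $8$ and $14$, which correspond to $n=4$ and $n=7$, both handled separately.
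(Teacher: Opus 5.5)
Your proof is correct and is essentially the paper's proof. It has the same separate treatment of $n=4$ and $n=7$ (including the pattern $[1,\dots,7]$) and the pattern $[1,2,3,4,5]$ when $5\mid n$. It also uses the same concatenation of six-blocks and five-blocks for the bound $6$; the paper takes exactly $p\equiv 2n \pmod 5$ six-blocks, which is one particular solution of your $2n=5a+6b$. The one variation is the lower bound $6$ for $5\nmid n$: you count $|c^{-1}(i)|\le\lfloor 2n/5\rfloor$, while the paper shows that any $5$-coloring is forced to repeat $[1,2,3,4,5]$ around the cycle. Both arguments are valid, and yours also covers $n=3,4,7$ at once. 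One small slip: occurrences of color $6$ can be more than $6$ apart when five-blocks sit between six-blocks, so "distance $5$ or $6$" is not quite right, but this is harmless because you only need distance at least $5$.
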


\begin{proof}
Let $S=(2^{\infty})$. In this proof we distinguish four cases with respect to the length $n$ of a cycle $C_n$.

\textbf{Case 1.}  $n=4$ \\
Since $|V(C_4)|+|E(C_4)|=8$ and any two object from $C_4$ are pairwise at distance at most $2$, any $S$-packing total coloring of $C_4$ assigns each color to at most one element from $V(C_4) \cup E(C_4)$. Therefore, any $S$-packing total coloring of $C_4$ uses $|V(C_4)|+|E(C_4)|=8$ colors, hence $\chi_S^{''}(C_4) = 8$.

\textbf{Case 2.}  $n=7$ \\
Note that any two objects from $C_7$ are pairwise at distance at most $4$, which implies that any $S$-packing total coloring of $C_7$ assigns each of the colors to at most two elements. Since $|V(C_7)|+|E(C_7)|=14$, $\chi_S^{''}(C_7) \geq 7$. By coloring the consecutive elements of $C_7$ one after another using the color pattern $[1,2,3,4,5,6,7]$ we form an $S$-packing total coloring of $C_7$ using $7$ colors, hence $\chi_S^{''}(C_7) = 7$.

\textbf{Case 3.}  $2n \equiv 0 \ (\mathrm{mod} \ 5)$ \\ 
Using Observation~\ref{obs2} and Prop.~\ref{prp_2222_paths} we derive that $\chi_S^{''}(C_n) \geq 5$ for any $n \geq 3$. On the other hand, by coloring the consecutive elements of $C_n$ one after another using the color pattern $[1,2,3,4,5]$ we obtain an $S$-packing total coloring of $C_n$ using $5$ colors, hence $\chi_S^{''}(C_n) = 5$. 

\textbf{Case 4.}  $2n\equiv p \ (\mbox{mod } 5)$, $p \in \{1,2,3,4\}$, and $n \notin \{4,7\}$ \\ 
Let $n$ satisfy the conditions $2n \equiv p \pmod 5$, where $p \in \{1,2,3,4\}$, and $n \notin {4,7}$. 

Color the consecutive elements of $C_n$ successively as follows. First, apply the color pattern $[1,2,3,4,5,6]$ exactly $p$ times. Then, if necessary, continue by repeating the pattern $[1,2,3,4,5]$ until all remaining elements have been colored. One can verify that this construction yields an $S$-packing total coloring of $C_n$, except when $2n \in \{7,8,9,13,14,19\}$. Since $2n$ is even and, by assumption, $2n \notin \{8,14\}$, none of these exceptional cases can occur. Therefore, the proposed coloring is a proper $S$-packing total coloring of $C_n$ using six colors. Consequently, $\chi_S^{''}(C_n)\leq 6$.

Now, suppose that $\chi_S^{''}(C_n) \leq 5$, and let $c$ be an $S$-packing total coloring of $C_n$ using five colors. Since any five consecutive elements of $C_n$ (either three vertices and two edges, or two vertices and three edges) are pairwise at distance at most $2$, they must receive five distinct colors by $c$. Without loss of generality, assume that
$c(v_1)=1$, $c(e_{1,2})=2$, $c(v_2)=3$, $c(e_{2,3})=4$, and $c(v_3)=5$.
Then the colors of all subsequent elements are uniquely determined, forcing the repeating pattern $[1,2,3,4,5]$ on $e_{3,4}, v_4, e_{4,5}, v_5, \ldots$ However, since $2n \not\equiv 0 \pmod 5$, this pattern cannot be completed consistently around the cycle. Consequently, $c$ cannot be a proper $S$-packing total coloring using only five colors, which contradicts to our assumption. Therefore, $\chi_S^{''}(C_n)=6$.

{}
    \qed
\end{proof}

\section{Open problems}
The introduction of $S$-packing total coloring and the initial results presented in this paper naturally lead to several open questions and avenues for further research. 

In the first part of the paper, we observed that $max\{\chi_S(G), \chi_S'(G)\} \leq \chi_S^{''}(G)$ for every graph $G$ and every sequence $S$. While the gap between $\chi_S(G)$ and $\chi_S''^{}(G)$ can be arbitrarily large, we also provided an example showing that equality may occur. Since this has been established for only one particular sequence $S$, namely $S=(1,1,1,\ldots)$, a natural question is to characterize the sequences $S$ and graphs $G$ for which $\chi_S(G) = \chi_S^{''}(G)$. Additionally, a difference $\chi_S^{''}(G)-\chi_S'(G)$ has not yet been considered.

Furthermore, for an arbitrary sequence $S$, we have characterized all graphs $G$ for which $\chi_S^{''}(G) \in \{1,2,3\}$. It would be natural to extend these results by providing a characterization of graphs with $S$-packing total chromatic number equal to
$4$ or, more generally, 
larger values. We note that Ferme and Mesari\v c \v Stesl have already provided a characterization of graphs with $S$-packing total chromatic number $4$ or $5$ in the special case when $S=(1,2,3,\ldots)$\cite{ferme-stesl}. 

Moreover, for any $S$, it would be natural to determine in which families of graphs the $S$-packing total chromatic number is unbounded. For instance, since the packing chromatic number in the family of (sub)cubic graphs is unbounded~\cite{balogh-2018}, the same holds also for $S$-packing total chromatic number of graphs when $S=(1,2,3, \ldots)$. 

Next, for an arbitrary sequence $S$ of positive integers, we determined the $S$-packing total chromatic numbers for all complete bipartite graphs. Moreover, for sequences $S$ containing only the integers $1$ and/or $2$, we established the values of $\chi_S^{''}(G)$ when $G$ is a finite or infinite path, or a cycle. The study of $S$-packing total colorings for other classical families of graphs remains open.

Furthermore, it is not known how the $S$-packing total chromatic number behaves under standard graph operations such as the Cartesian product, the corona product, or the Mycielskian construction, in terms of the $S$-packing total chromatic numbers of input graphs. It would also be interesting to study $\chi_S^{''}(G)$ with respect to various local operations on a graph $G$, such as edge subdivision, edge contraction, or edge and vertex deletion. Note that the last two types of local modification are closely related to the concept of critical graphs.
For any sequence $S$, a graph $G$ is called \textit{$S$-packing critical} if $\chi_S(H)<\chi_S(G)$ for every proper subgraph $H$ of $G$~\cite{bujtas}. Similarly, we can say that a graph is \textit{$S$-packing total critical} if $\chi_S^{''}(H)<\chi_S^{''}(G)$ for any proper subgraph $H$ of $G$. It is natural to consider such graphs, for example in order to characterize $S$-packing total critical graphs satisfying $\chi_S^{''}(G)=k$ for a given choice of sequence $S$.

\section{Acknowledgements}
J.F. acknowledges the financial support from the Slovenian Research Agency (N1-0431).


\end{document}